\setlist[enumerate]{itemsep=.2em,topsep=.2em,leftmargin=1.25em,itemindent=2.0em}
\newtheorem{thm}{Theorem}%[section]
\newtheorem{lem}[thm]{Lemma}
\newtheorem{cor}[thm]{Corollary}
\theoremstyle{definition}
\newtheorem{say}[thm]{}
\newtheorem{exmp}[thm]{Example}
\newtheorem{ques}[thm]{Question}    %!!!!!!!!!!!!!!!!!!!!
\newtheorem{rem}[thm]{Remark}
\newtheorem{rems}[thm]{Remarks} 
\newtheorem*{ack}{Acknowledgments}      % \renewcommand{\theack}{} 
\newtheorem{defn-thm}[thm]{Definition--Theorem}  %!!!!!!!!!!!!!!!!!!!!!!!!
\newtheorem{defn-lem}[thm]{Definition--Lemma}  %!!!!!!!!!!!!!!!!!!!!!!!!
\theoremstyle{remark}
\let \cedilla =\c
\renewcommand{\c}[0]{{\mathbb C}}  
\renewcommand{\o}[0]{{\mathcal O}} 
\renewcommand{\a}[0]{{\mathbb A}}
\newcommand{\p}[0]{{\mathbb P}}
\newcommand{\qtq}[1]{\quad\mbox{#1}\quad}
\newcommand{\pics}[0]{\operatorname{\mathbf{Pic}}}
\newcommand{\pico}[0]{\operatorname{\mathbf{Pic}}^{\circ}}
\newcommand{\ext}[0]{\operatorname{Ext}}    
\newcommand{\Hom}[0]{\operatorname{Hom}}
\newcommand{\onto}[0]{\twoheadrightarrow}
\newcommand{\depth}[0]{\operatorname{depth}} 
\newcommand{\tsum}[0]{\textstyle{\sum}}
\def\into{\DOTSB\lhook\joinrel\to}
\newcommand{\leteq}{\colon\!\!\!=}
\def\loccoh#1.#2.#3.#4.{H^{#1}_{#2}(#3,#4)}
\DeclareMathAlphabet{\mathchanc}{OT1}{pzc}%
                                {m}{it}
\newcommand{\sExt}[0]{{\mathchanc{Ext}}}
\newcommand{\sym}[0]{\operatorname{Sym}}
\newcommand{\myR}{{\mathchanc{R}\!}}
\newcommand{\kdot}{{{\,\begin{picture}(1,1)(-1,-2)\circle*{2}\end{picture}\,}}}
\newcommand{\lotimes}{\overset{L}{\otimes}}
\newcommand{\myL}{{\mathchanc{L}\!}}
\newcommand{\blank}{\mbox{---}}
\newcommand{\sHom}[0]{{\mathchanc{Hom}}}
\begin{document}
\bibliographystyle{amsalpha}

 %%\hfill\today

 \title{Higher direct images of dualizing sheaves {III}}
 %%\author{J\'anos Koll\'ar and S\'andor J.~Kov\'acs}
\author{J\'anos Koll\'ar}
\email{kollar@math.princeton.edu}
\address{\vskip-1.75em Department of Mathematics, Princeton University, Fine
  Hall, Washington Road, Princeton, NJ 08544-1000, USA} 
%\urladdr{http://www.math.princeton.edu/$\sim$kollar\xspace}
\author{S\'andor J Kov\'acs}
\email{skovacs@uw.edu}
\address{\vskip-1.75em University of Washington, Department of Mathematics, Box 354350,
Seattle, WA 98195-4350, USA}
 \begin{abstract}  We show that for flat morphisms between varieties with rational singularities, the higher direct images of the structure sheaf are locally free.  As a consequence, the identity component of the relative  Picard scheme is a smooth algebraic group scheme.
   
       \end{abstract}

 \maketitle

 It is frequently useful to show that certain cohomology groups are invariant under flat deformations.
 The best known example is the deformation invariance of the  groups
 $H^q(X, \Omega_X^p)$ for smooth projective varieties in characteristic 0.
 Much less is known for singular varieties. The notion of Du~Bois singularities was developed to guarantee the deformation invariance of the 
 cohomology groups of the structure sheaf \cite{dub-jar}.

 Given a variety $X_0$ and a sheaf $F_0$, most proofs of  the  deformation invariance of some $H^i(X_0, F_0)$  rely on local properties of $X_0$ and $F_0$; see for example \cite{k-db2, k-modbook, gabber2024cohomologicalflatnessdiscretevaluation} for typical examples.

 Our aim is to obtain deformation invariance results assuming  that the singularities of  the total space of the deformation are mild, but  knowing   little about special fibers.
 Working with varieties over a field of characteristic 0 throughout, 
 the main result is the following.

 \begin{thm} \label{lf.thm} Let $g:X\to S$ be a flat, projective morphism.   Assume that
   $X$ has rational singularities.
      Then
   \begin{enumerate}
   \item $\myR^ig_*\o_X$ and $\myR^ig_*\omega_{X/S}$ are locally free and commute with base change.
 \item $\myR g_*\o_X\simeq \tsum \myR^ig_*\o_X[-i]$.
 \item $\myR g_*\omega_{X/S}\simeq \tsum \myR^ig_*\omega_{X/S}[-i]$.
   \item $\myR^ig_*\omega_{X}\cong \omega_S\otimes \myR^ig_*\omega_{X/S}$.
     \end{enumerate} 
 \end{thm}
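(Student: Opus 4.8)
The plan is to derive (4) formally from the already-established parts (1) and (3), using the compatibility of the absolute and relative dualizing complexes under the flat map $g$. The one substantive input is the sheaf isomorphism $\omega_X\cong\omega_{X/S}\otimes g^*\omega_S$; granting it, (4) drops out of the projection formula together with the local freeness and splitting furnished by (1) and (3).

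First I would check that $S$ is Cohen--Macaulay. Since $\myR^ig_*(-)$ is supported on $g(X)$, which is open and closed because $g$ is flat and proper, we may assume $g$ is surjective. For each $x\in X$ the local map $\o_{S,g(x)}\to\o_{X,x}$ is flat, and $\o_{X,x}$ is Cohen--Macaulay as rational singularities are Cohen--Macaulay; by additivity of depth and dimension along a flat local homomorphism, both $\o_{S,g(x)}$ and the fiber are then Cohen--Macaulay. Thus $S$ and the fibers of $g$ are Cohen--Macaulay, so $\omega_S^\bullet\simeq\omega_S[\dim S]$, $\omega_X^\bullet\simeq\omega_X[\dim X]$ and $\omega_{X/S}^\bullet\simeq\omega_{X/S}[\dim X-\dim S]$ are all single sheaves up to shift.

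Next I would invoke Grothendieck duality. For the flat morphism $g$ one has $\omega_X^\bullet\cong g^!\omega_S^\bullet\cong\omega_{X/S}^\bullet\lotimes g^*\omega_S^\bullet$, the second isomorphism being the standard factorization of $g^!$ for a flat map. Passing to cohomology sheaves and cancelling the shifts computed above, the left side is a sheaf, so there is no higher Tor and we obtain $\omega_X\cong\omega_{X/S}\otimes g^*\omega_S$. Applying $\myR g_*$ and the projection formula $\myR g_*(\sF\lotimes g^*\sG)\simeq\myR g_*\sF\lotimes\sG$ (valid since $g$ is flat) then gives $\myR g_*\omega_X\simeq\myR g_*\omega_{X/S}\lotimes\omega_S$.

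Finally, by (3) the right-hand side equals $\bigl(\tsum_i\myR^ig_*\omega_{X/S}[-i]\bigr)\lotimes\omega_S$, and by (1) each $\myR^ig_*\omega_{X/S}$ is locally free, hence flat, so the derived tensor collapses to $\tsum_i\bigl(\myR^ig_*\omega_{X/S}\otimes\omega_S\bigr)[-i]$. Taking $i$-th cohomology sheaves yields $\myR^ig_*\omega_X\cong\omega_S\otimes\myR^ig_*\omega_{X/S}$, as desired. The only real content sits in the duality step: once (1) and (3) are in hand the remainder is bookkeeping, and the main thing to get right is that flatness of $g$ makes $g^!\omega_S^\bullet\cong\omega_{X/S}^\bullet\lotimes g^*\omega_S^\bullet$ hold on the nose, which is precisely why the Cohen--Macaulay reductions are needed so that every object involved is concentrated in a single degree.
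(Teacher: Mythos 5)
Your reduction of (\ref{lf.thm}.4) to (\ref{lf.thm}.1) and (\ref{lf.thm}.3) is correct, and it is in fact the same route the paper takes for that final step: the factorization $\omega_X^\kdot\simeq\omega_{X/S}^\kdot\lotimes \myL g^*\omega_S^\kdot$ (Lemma~\ref{lem:ltensor}), its CM consequence $\omega_X\cong\omega_{X/S}\otimes g^*\omega_S$ (Corollary~\ref{cor.lem:ltensor}), the derived projection formula (Corollary~\ref{cor:projection-formula}), and then the splitting from (\ref{lf.thm}.3) together with local freeness from (\ref{lf.thm}.1). The problem is that this is not a proof of the theorem. Parts (\ref{lf.thm}.1)--(\ref{lf.thm}.3) are the substance of the statement, and you have assumed them outright; nothing in your write-up addresses why $\myR^ig_*\o_X$ and $\myR^ig_*\omega_{X/S}$ are locally free and commute with base change, which is where all the work lies. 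The paper proves (\ref{lf.thm}.1) by induction on the relative dimension: a general ample divisor $H\subset X$ again has rational singularities and is flat over $S$; pushing forward $0\to\omega_{X/S}\to\omega_{X/S}(H)\to\omega_{H/S}\to 0$ settles $\myR^ig_*\omega_{X/S}$ for $i\geq 2$ by induction, while for $\myR^1g_*\omega_{X/S}$ one needs the Hodge-theoretic splitting over a large open subset $S^\circ\subset S$ (Theorem~\ref{hdi2.thm}, resting on \cite{k-hdi2} and the semi-simplicity of variations of pure Hodge structures), the extension of that splitting across codimension $\geq 2$ via $S_2$-ness (\ref{split.lem}.2), and torsion-freeness of $\myR^1g_*\omega_X$ from \cite[2.1]{k-hdi1}. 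Then (\ref{lf.thm}.2) follows from (\ref{lf.thm}.1) by \cite[3.12]{k-hdi2}, and (\ref{lf.thm}.3) by duality.

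There is also a circularity trap hidden in your plan that is worth understanding. The torsion-killing step just described requires comparing $\myR^1g_*\omega_{X/S}$ with $\myR^1g_*\omega_X$, i.e.\ it needs the special case $\myR^1g_*\omega_X\cong\omega_S\otimes\myR^1g_*\omega_{X/S}$ of (\ref{lf.thm}.4) \emph{before} (\ref{lf.thm}.1) and (\ref{lf.thm}.3) are available (Assumption~\ref{lf.thm.pf}.4). When $\omega_S$ is locally free this is just the projection formula, but in general it must be verified with weaker inputs: the paper does this in Paragraph~\ref{verify.assump}, using only the local freeness of $\myR^ig_*\omega_{X/S}$ for $i\geq 2$ (already known by induction) via the truncation argument of Lemma~\ref{lem:coh-of-ltwist-fg}. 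So (\ref{lf.thm}.4) is not merely formal bookkeeping downstream of (\ref{lf.thm}.1) and (\ref{lf.thm}.3); a piece of it is an essential ingredient in their proof, and your derivation cannot be the first place where it enters the argument.
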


 \begin{rems}
  $S$ has rational singularities by \cite{BoutotRational}.
 
   We do not assume that $g$ has connected fibers.

 If $X$ is only assumed CM, the properties (\ref{lf.thm}.1--4) almost imply each other;  see Theorem~\ref{locally.omega.thm} for the a precise formulation.

 For Lagrangian fibrations of  hyperk\"ahler varieties,
\cite{MR2130616} determines the sheaves $\myR^ig_*\o_X$ and $\myR^ig_*\omega_{X/S}$; they are locally free. This was used in 
\cite{kim2024neronmodelhigherdimensionallagrangian} to show that 
$\pics^\circ(X/S)\to S$ is a smooth algebraic group for Lagrangian fibrations. Our main aim was to extend Kim's  result to  more general settings, especially to the
Abelian fibrations considered in \cite{k-neron, k-abtwist}. 
 
 As we discuss in Theorem~\ref{hdi2.thm},  \cite{k-hdi2} 
 implies that  (\ref{lf.thm}.1--4) hold over a large open subset
 $S^\circ\subset S$ (even if $g$ is not assumed flat). Thus the main  assertion is that if $g$ is flat, then these hold everywhere.

 There are similar results if  $X$ has some non-rational singularities, see Theorem~\ref{iso.nonrta.rem}. However,  already (\ref{lf.thm}.1) can fail if $X$ is only assumed CM.
Example~\ref{exmp.1} shows that for $i\geq 2$, the $h^i(X_s, \o_{X_s})$ can jump for families whose members are normal hypersurfaces in a smooth family of varieties.
It is not clear what an optimal result would be.

In positive characteristic, the $h^i(X_s, \o_{X_s})$ can jump even in smooth, projective families, as shown by Enriques surfaces in characteristic 2 \cite{MR491720}.

It is quite likely that the results also hold for rather general base schemes over a field of characteristic 0. Our proof relies heavily on
\cite{k-hdi1, k-hdi2} which need $S$ to be a variety.
The torsion freeness of the sheaves  $R^ig_*\omega_X$ \cite[2.1]{k-hdi1}  has been extended to general base schemes in \cite{MR4945100, mur-van}. However, in Theorem~\ref{hdi2.thm} we rely more heavily on variations of Hodge structures. It is not obvious how to extend these result to other base schemes.
\end{rems}

\medskip 
 
The proof is given in Paragraph~\ref{lf.thm.pf}, but first we discuss some applications. We start with the structure of the relative Picard scheme.
As explained in \cite[Sec.2.2]{kim2024neronmodelhigherdimensionallagrangian},
the combination of \cite[Thm.7.3]{Artin69b}, \cite[Thm.8.4.1]{blr},  and
Prop.1.6  and Thm.3.1 in \cite[Exp.VI$_B$]{sga3} yields the following.

   \begin{cor} \label{lf.cor.1}
    Let   $g:X\to S$ be as in Theorem~\ref{lf.thm}. Then
    $\pics^\circ(X/S)\to S$ is a smooth algebraic group scheme, which
    commutes with all base changes. \qed
    \end{cor}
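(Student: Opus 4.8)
The plan is to feed Theorem~\ref{lf.thm}(1) into the representability and smoothness results cited above, so that the corollary reduces to verifying their hypotheses. First I would invoke the case $i=0$ of Theorem~\ref{lf.thm}(1): it asserts that $g_*\o_X$ is locally free and commutes with base change, i.e.\ that $g$ is cohomologically flat in dimension $0$. Combined with the flatness and projectivity of $g$, this is precisely the input required by \cite[Thm.7.3]{Artin69b} and \cite[Thm.8.4.1]{blr} to represent the relative Picard functor $\pics(X/S)$ by a separated $S$-scheme, locally of finite presentation; since we do not assume connected fibers, the identity component $\pics^\circ(X/S)$ is then extracted as an open subgroup scheme of finite type over $S$ via \cite[Exp.VI$_B$, Thm.3.1]{sga3}. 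Because cohomological flatness in dimension $0$ is preserved under base change (again by the base-change clause of Theorem~\ref{lf.thm}(1)), the formation of $\pics^\circ(X/S)$ commutes with all base changes.

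Next I would identify the Lie algebra of $\pics(X/S)$ along the identity section with $\myR^1g_*\o_X$. By Theorem~\ref{lf.thm}(1) this sheaf is locally free, say of rank $r$, and commutes with base change, so $h^1(X_s,\o_{X_s})=r$ for every $s\in S$. Working in characteristic $0$, Cartier's theorem (\cite[Exp.VI$_B$, Prop.1.6]{sga3}) shows that every fiber $\pics^\circ(X_s/s)$ is a smooth algebraic group, necessarily of dimension $\dim_{\kappa(s)}H^1(X_s,\o_{X_s})=r$. At this stage $\pics^\circ(X/S)\to S$ is a separated group scheme of finite type whose fibers are all smooth of the same dimension $r$ and whose Lie algebra is locally free of the same rank $r$.

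Finally I would promote this fiberwise smoothness to smoothness over $S$. By the general theory of group schemes (\cite[Exp.VI$_B$]{sga3}), a group scheme of finite presentation over a reduced base whose fibers are smooth of locally constant dimension is smooth over the base: here $S$ is a (reduced) variety, the fibers are smooth of the constant dimension $r$ by the previous step, and the local freeness of $\myR^1g_*\o_X$ of rank $r$ pins down the relative dimension along the identity section, where homogeneity localizes the flatness question. Hence $\pics^\circ(X/S)\to S$ is a smooth algebraic group scheme, and its formation commutes with all base changes by the compatibilities recorded above.

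The argument is in essence bookkeeping: every nontrivial input comes from Theorem~\ref{lf.thm}(1), and the only real work is to match its two halves to the two places they are used---cohomological flatness in dimension $0$ for representability, and local freeness together with base change of $\myR^1g_*\o_X$ for smoothness. The delicate point is the last step. Fiberwise smoothness is automatic in characteristic $0$, but it can be spread out over $S$ only because $h^1(X_s,\o_{X_s})$ is \emph{constant}; without the base-change statement of Theorem~\ref{lf.thm}(1) the rank of the Lie algebra could strictly exceed the fiber dimension at special points, and $\pics^\circ(X/S)$ would then be non-flat, hence non-smooth, over $S$. Ruling out exactly this jumping phenomenon is the whole content of Theorem~\ref{lf.thm}.
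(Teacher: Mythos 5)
Your argument is correct and is essentially the paper's own proof: the paper establishes the corollary by citing exactly the chain you reconstruct---\cite[Thm.7.3]{Artin69b}, \cite[Thm.8.4.1]{blr}, and Prop.1.6 and Thm.3.1 of \cite[Exp.VI$_B$]{sga3}, as assembled in \cite[Sec.2.2]{kim2024neronmodelhigherdimensionallagrangian}---fed by the same two inputs you isolate from Theorem~\ref{lf.thm}(1), namely cohomological flatness in dimension $0$ (the $i=0$ case) for representability, and local freeness plus base change of $\myR^1g_*\o_X$ for constancy of the fiber dimension and hence smoothness over the reduced base. The one place your wording outruns the cited results is in claiming scheme representability directly from Artin's theorem, which produces an algebraic space; the paper itself flags this in the remark following the corollary, noting that these proofs give that $\pics^\circ(X/S)\to S$ is an algebraic space.
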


   \begin{rems}
     Cases when $X$ has some non-rational singularities are treated in Corollary~\ref{lf.cor.1.nr}.

   These proofs give that $\pics^\circ(X/S)\to S$ is an algebraic space.
      Quite likely, it is also quasi-projective. 
    \end{rems}

      For non-flat morphisms, Theorem~\ref{lf.thm} gives the following.

  \begin{cor} \label{lf.cor.2} Let $g:X\to S$ be a projective, equidimensional morphism.
    Assume that $X$  has rational singularities.  Then
    \begin{enumerate}
    \item   The sheaves $\myR^ig_*\o_X$  and $\myR^ig_*\omega_X$ are maximal  CM \cite[\href{https://stacks.math.columbia.edu/tag/00NF}{Tag
00NF}]{stacks-project}, 
    \item $\myR g_*\o_X\simeq \tsum \myR^ig_*\o_X[-i]$, and 
    \item $\myR g_*\omega_{X}\simeq \tsum \myR^ig_*\omega_{X}[-i]$.
    \end{enumerate}
  \end{cor}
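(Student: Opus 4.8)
\textbf{Plan of proof for Corollary~\ref{lf.cor.2}.}
The strategy is to reduce the equidimensional (non-flat) statement to the flat case already established in Theorem~\ref{lf.thm}. Since $g$ is equidimensional and projective with $X$ having rational singularities, the morphism $g$ is automatically flat over the generic points of $S$ in each relevant component, but may fail to be flat along a closed subset of positive codimension in $S$. The natural first move is to stratify $S$ by the flat locus: there is a dense open $S^\circ\subset S$ over which $g$ is flat, and over $S^\circ$ Theorem~\ref{lf.thm} applies directly, giving that $\myR^ig_*\o_X$ and $\myR^ig_*\omega_{X/S}$ are locally free there. The content of the corollary is then to control what happens over the non-flat locus $S\setminus S^\circ$ and to show that local freeness degrades only to the maximal Cohen--Macaulay property rather than failing outright.

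First I would establish the maximal CM claim in (1). The key tool here should be the torsion-freeness results for $\myR^ig_*\omega_X$ from \cite[2.1]{k-hdi1}, combined with the rationality of the singularities of $X$. I would argue by descending induction on the codimension of strata in $S$: over the flat locus the sheaves are locally free, hence maximal CM; to propagate maximal CM across the non-flat locus, I would use that $S$ itself has rational (in particular CM) singularities by \cite{BoutotRational}, and invoke a local cohomology / depth estimate (the Auslander--Buchsbaum type bound, or the $\tfs$ / $S_2$-type conditions referenced in the setup) to show that the depth of $\myR^ig_*\o_X$ and $\myR^ig_*\omega_X$ at any point of $S$ equals $\dim\sO_{S,s}$. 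The equidimensionality of $g$ is exactly what guarantees that the fiber dimensions do not jump, which is what prevents depth from dropping as one specializes. This is the step I expect to be the main obstacle, since one must carefully track how higher direct images behave under the non-flat specializations and confirm that no spurious torsion or depth loss is introduced.

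For the decomposition statements (2) and (3), the plan is to bootstrap from the flat case via the same formalism that underlies the splittings (\ref{lf.thm}.2--3). The decomposition $\myR g_*\o_X\simeq\tsum\myR^ig_*\o_X[-i]$ is a statement in the derived category of $S$; over the flat locus $S^\circ$ it holds by Theorem~\ref{lf.thm}(2). Since both sides are objects whose cohomology sheaves are now known (from part (1)) to be maximal CM, and since a map of complexes that is an isomorphism over a dense open and between reflexive-type sheaves extends, I would argue that the splitting morphism constructed over $S^\circ$ extends uniquely over all of $S$ by the $S_2$-property; the obstruction to splitting lives in $\ext$-groups which vanish by the same depth considerations. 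The identical argument, now applied to $\omega_X$ rather than $\omega_{X/S}$ and using part (1)'s maximal CM conclusion for $\myR^ig_*\omega_X$, yields (3).

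The cleanest way to organize the whole argument is to first reduce to the flat locus, then prove (1) as the genuine new input, and finally deduce (2) and (3) formally from (1) together with the derived-category decompositions of Theorem~\ref{lf.thm}. The essential geometric hypothesis doing the work throughout is equidimensionality, which substitutes for flatness just enough to keep fiber dimensions — and hence the relevant depths — constant; rationality of the singularities of $X$ supplies the vanishing and duality statements, and \cite[2.1]{k-hdi1} supplies torsion-freeness, which together upgrade ``generically locally free'' to ``everywhere maximal CM.''
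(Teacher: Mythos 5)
There is a genuine gap, and it sits exactly where you flagged it: you never supply a mechanism for upgrading ``locally free over the flat locus'' to ``maximal CM everywhere.'' Torsion-freeness from \cite[2.1]{k-hdi1} gives depth $\geq 1$, and $S_2$-type statements give depth $\geq 2$, but maximal CM requires depth equal to $\dim \o_{S,s}$ at every point, which can be arbitrarily large; no amount of generic local freeness plus torsion-freeness forces this. A concrete warning example: the ideal sheaf of a point on a smooth surface is torsion-free and locally free off one point, yet has depth $1<2$ there. Your appeal to equidimensionality (``fiber dimensions do not jump, hence depths do not drop'') is not an argument --- there is no direct link between fiber dimension and the depth of $\myR^ig_*\o_X$ at special points of $S$. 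The proposed ``descending induction on strata'' has no inductive step. The same circularity infects your plan for (2) and (3): the $\ext$-vanishing and extension arguments presuppose the depth statements of (1), and even granting them, extending a derived-category decomposition across a closed subset is delicate --- the paper's own Remark~\ref{split.lem} (see its footnote) only guarantees extension of $\Hom$ and injectivity of $\ext^1$ restriction, not the fullness one would need for complexes. (You also never verify that the non-flat locus has codimension $\geq 2$; this is true, but only after the miracle-flatness observation below, applied over the smooth locus of the normalized $S$.)

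The paper's actual proof avoids all of this with a different and much shorter reduction: normalize $S$ (harmless since $X$ is normal), assume $S$ affine, and choose a finite Noether-normalization projection $\pi:S\to S'$ with $S'$ smooth. The composite $g':=\pi\circ g$ is still projective and equidimensional, and since $X$ is CM (rational singularities) and $S'$ is regular, $g'$ is \emph{flat} by miracle flatness. Now Theorem~\ref{lf.thm} applies to $g'$, so $\myR^ig'_*\o_X=\pi_*\myR^ig_*\o_X$ is locally free on $S'$; since $\pi$ is finite, depth is unchanged under $\pi_*$, so $\myR^ig_*\o_X$ is maximal CM on $S$, and likewise for $\omega_X$. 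In other words, instead of restricting to the flat locus and trying to extend, the paper makes the morphism flat globally by changing the base to a smooth one --- which converts ``maximal CM over a singular base'' into ``locally free over a smooth base,'' exactly what Theorem~\ref{lf.thm} delivers. Parts (2) and (3) are then not proved by any extension argument at all: once (1) holds, (2) follows by citing \cite[3.12]{k-hdi2}, and (3) is a direct citation of the decomposition theorem \cite[3.1]{k-hdi2}, which requires neither flatness nor equidimensionality.
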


The proof is in  Paragraph~\ref{lf.cor.2.pf}.

  Note the change from  $\omega_{X/S}$ in Theorem~\ref{lf.thm} to $\omega_{X}$ here. We do not know what happens for
    the $\myR^ig_*\omega_{X/S}$.

    Even if $g$ is not equidimensional, the sheaves
    $\myR^ig_*\omega_X$ are all torsion-free by \cite[2.1]{k-hdi1},
     and $\myR^1g_*\o_X$  is $S_2$ 
    by \cite[7.8]{k-hdi1} if $g$ has  connected fibers.  The other  $\myR^ig_*\o_X$ are usually not even torsion-free, see Example~\ref{h2.tor.exmp}
    
    (\ref{lf.cor.2}.3) holds by \cite[3.1]{k-hdi2}.
    \medskip

  \subsection*{The proof of  Theorem~\ref{lf.thm}}{\ }
 \medskip

 The proof  relies on the following, which  is one of  the main theorems of \cite{k-hdi2}, but it is not stated explicitly enough for our current purposes.

  An open subset $S^\circ\subset S$ is called {\it large}
  if $S\setminus S^\circ$ has codimension $\geq 2$.

 \begin{thm} \label{hdi2.thm} Let $g:Y\to S$ be a  projective morphism. Assume that $Y$ has  rational singularties and  $S$ is normal. Let $H$ be a  member of a basepoint-free linear system on $Y$. Then there is a large open subset
   $S^\circ\subset S$ such that the following hold for  $g^\circ: Y^\circ:=g^{-1}(S^\circ)\to S^\circ$ and $H^\circ:=Y^\circ\cap H$.
   \begin{enumerate}
\item All the sheaves  $\myR^ig^\circ_*\o_{Y^\circ}, \myR^ig^\circ_*\o_{Y^\circ}(-H^\circ), 
  \myR^ig^\circ_*\omega_{Y^\circ}$, and $ \myR^ig^\circ_*\omega_{Y^\circ}(H^\circ)$ are locally free.
\item Each of the natural maps
  $\myR^ig^\circ_*\o_{Y^\circ}\to \myR^ig^\circ_*\o_{H^\circ}$ 
  and
 $ \myR^ig^\circ_*\omega_{H^\circ}\to
 \myR^{i+1}g^\circ_*\omega_{Y^\circ}$
 is a composite of a split surjection followed by a split injection.
 \item $\myR g^\circ_*\o_{Y^\circ}\simeq \tsum\myR^ig^\circ
   _*\o_{Y^\circ}[-i]$ and
   $\myR g^\circ_*\omega_{Y^\circ}\simeq \tsum\myR^ig^\circ
 _*\omega_{Y^\circ}[-i]$.
   \end{enumerate}
   \end{thm}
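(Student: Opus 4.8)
The plan is to reduce to a smooth total space and then spread out a Hodge-theoretic statement from the generic point across the codimension-one boundary. Since $Y$ has rational singularities, choose a resolution $\pi\colon Y'\to Y$; then $\myR\pi_*\o_{Y'}\simeq \o_Y$, while Grauert--Riemenschneider vanishing gives $\myR\pi_*\omega_{Y'}\simeq\omega_Y$. Set $H':=\pi^*H$, an effective Cartier divisor whose linear system is again basepoint free, so a general member is smooth by Bertini; the projection formula yields $\myR\pi_*\o_{Y'}(-H')\simeq \o_Y(-H)$ and $\myR\pi_*\omega_{Y'}(H')\simeq\omega_Y(H)$. Since a general $H$ again has rational singularities, $\myR\pi_*\o_{H'}\simeq\o_H$ and $\myR\pi_*\omega_{H'}\simeq\omega_H$ as well, so all four derived direct images in the statement, and both maps in (2), are computed on the smooth variety $Y'$. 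I may therefore assume from the outset that $Y$ and $H$ are smooth, keeping the notation $g\colon Y\to S$.

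First I would obtain part (1) over a large open. Let $U\subset S$ be the dense open locus over which $S$ is regular and $g$ is smooth; there classical Hodge theory makes each $\myR^ig_*\o_Y$ and $\myR^ig_*\omega_{Y/S}$ locally free and part of a polarizable variation of Hodge structure. To extend this to a large $S^\circ$ I would work in codimension one: restricting to a general curve through the generic point of a boundary divisor reduces to a regular base curve, where $\myR^ig_*\omega_{Y/S}$ is automatically locally free because it is torsion free by \cite[2.1]{k-hdi1}. The degenerating variation extends by Deligne's canonical extension, and the nilpotent orbit theorem identifies the relevant canonical extensions with $\myR^ig_*\omega_{Y/S}$ and, by duality, with $\myR^ig_*\o_Y$; deleting the codimension $\geq 2$ locus where cohomology and base change fails produces $S^\circ$. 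Since $S^\circ$ is regular, twisting by the line bundle $\omega_{S^\circ}$ passes between $\omega_{Y^\circ/S^\circ}$ and $\omega_{Y^\circ}$, giving local freeness of the untwisted sheaves; the two twisted sheaves will follow from the splittings below.

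For (3) I would invoke Kollár's splitting theorem \cite[3.1]{k-hdi2}, which gives $\myR g^\circ_*\omega_{Y^\circ}\simeq\bigoplus_i\myR^ig^\circ_*\omega_{Y^\circ}[-i]$, and deduce the analogue for $\o_{Y^\circ}$ by relative duality over the regular base $S^\circ$, equivalently from the $E_1$-degeneration supplied by the variation of Hodge structure. For (2), since $H$ is semiample the Lefschetz operator $\cup\, c_1(H^\circ)$ acts on this variation, and relative hard Lefschetz says its powers induce the expected isomorphisms on the graded pieces. Substituting these isomorphisms into the long exact sequences of
\[
0\to\o_{Y^\circ}(-H^\circ)\to\o_{Y^\circ}\to\o_{H^\circ}\to 0
\quad\text{and}\quad
0\to\omega_{Y^\circ}\to\omega_{Y^\circ}(H^\circ)\to\omega_{H^\circ}\to 0
\]
forces the connecting maps to vanish on the split summands, so each natural map in (2) factors as a split surjection followed by a split injection. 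Local freeness of $\myR^ig^\circ_*\o_{Y^\circ}(-H^\circ)$ and $\myR^ig^\circ_*\omega_{Y^\circ}(H^\circ)$ then falls out of these splittings together with the local freeness already established, completing (1).

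The main obstacle is the codimension-one step of the second paragraph: controlling the degeneration of the variation of Hodge structure along the boundary of the smooth locus of $g$ and matching Deligne's canonical extensions with the genuine higher direct images of $\omega_{Y/S}$ and $\o_Y$. This is precisely where the rational-singularities hypothesis (through the reduction to smooth $Y'$) and Kollár's torsion freeness do the essential work; once local freeness is secured in codimension one, spreading out to a large open and extracting the Lefschetz splittings are comparatively formal.
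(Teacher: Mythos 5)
Your skeleton overlaps substantially with the paper's: reduction to smooth $Y$ (and $H$) via rational singularities and Grauert--Riemenschneider, Hodge-theoretic input for local freeness over a large open, Koll\'ar's splitting \cite[3.1]{k-hdi2} for $\omega_{Y^\circ}$, and duality over the regular large open to convert $\omega$-statements into $\o$-statements (the paper does exactly this for (\ref{hdi2.thm}.3)). Two of your deviations are sound: local freeness of $\myR^ig_*\omega_{Y}$ in codimension one follows already from torsion-freeness \cite[2.1]{k-hdi1} plus normality of $S$ (stalks at codimension-one points are torsion-free modules over DVRs), which is a legitimate substitute for the paper's citation of \cite[2.6]{k-hdi2} for that sheaf; and, granted (\ref{hdi2.thm}.2), your long-exact-sequence derivation of local freeness of $\myR^ig^\circ_*\o_{Y^\circ}(-H^\circ)$ and $\myR^ig^\circ_*\omega_{Y^\circ}(H^\circ)$ is valid, since kernels and cokernels of maps factoring as split surjection followed by split injection are direct summands of locally free sheaves, and extensions of locally free by locally free are locally free. (The paper instead handles the twisted sheaves unconditionally, by realizing $\o_Y(-H)$ and $\omega_Y(H)$ as direct summands of $\pi_*\o_{\bar Y}$ and $\pi_*\omega_{\bar Y}$ for a double cover $\bar Y\to Y$ branched along a general member of $|2H|$ and applying \cite[2.6]{k-hdi2} to $\bar Y\to S$; this is independent of (\ref{hdi2.thm}.2), whereas your route is not.)

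The genuine gap is your proof of (\ref{hdi2.thm}.2), and it propagates into (\ref{hdi2.thm}.1) precisely because you route the twisted sheaves through it. First, hard Lefschetz is false for $c_1(H^\circ)$: the system $|H|$ is only basepoint-free, so $c_1(H)$ on a fiber is semiample but need not be ample, and cup product with such a class can have a large kernel --- e.g.\ the pullback of $\o_{\p^1}(1)$ to $C\times \p^1$ has square zero, so it cannot induce $H^0\cong H^4$; nor is $H$ assumed relatively ample, so ``relative hard Lefschetz'' does not apply either. Second, even where a Lefschetz operator exists, ``forces the connecting maps to vanish on the split summands'' is not an argument that the restriction map factors as a split surjection followed by a split injection. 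Third, and most importantly, any splitting produced fiberwise or over the smooth locus lives on $S\setminus D$, which is \emph{not} a large open subset ($D$ is typically a divisor); a direct sum decomposition of locally free sheaves on $S\setminus D$ does not extend across $D$ for free. What is actually needed is the paper's chain: the restriction $H^i(Y_b,\c)\to H^i(H_b,\c)$ is a morphism of polarizable variations of Hodge structure; by Deligne \cite{MR0498551} this category is semi-simple, so the morphism is a split surjection followed by a split injection there, with no positivity hypothesis on $H$; and by the main theorem of \cite[p.174]{k-hdi2} the coherent sheaves in (\ref{hdi2.thm}.1) over $S^\circ$ are determined by these variations through a correspondence that \emph{respects direct sums}, which is what transports the splitting across the boundary divisor to $S^\circ$. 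Your mention of canonical extensions gestures at this last step but never states the direct-sum functoriality, which is the crux; without it, and with hard Lefschetz unavailable, both (\ref{hdi2.thm}.2) and your treatment of the twisted sheaves in (\ref{hdi2.thm}.1) are unproven.
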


 Proof.    Let $r:Y'\to Y$ be a resolution of singularities and $g':Y'\to S$ the composite.
  Then $\o_Y=\myR r_*\o_{Y'}$ and
 $\omega_Y=\myR r_*\omega_{Y'}$.
 Thus $\myR^ig_*\o_Y=\myR^ig'_*\o_{Y'}$, and similarly for
 $\myR^ig_*\o_Y(-H), \myR^ig_*\omega_Y, \myR^ig_*\omega_Y(H)$.
 It is thus enough to prove the claims when $Y$ is smooth.

 Let $D\subset S$ be the locus of singular fibers, and
 $Z\subset D$ the smallest closed subset  such that
 $D\setminus Z$ is a smooth divisor in $S \setminus Z$.
 Set $S^\circ:=S \setminus Z$.

 The  sheaves
  $\myR^ig^\circ_*\o_{Y^\circ}, 
 \myR^ig^\circ_*\omega_{Y^\circ}$ are locally free by
 \cite[2.6]{k-hdi2}.

 Let $\pi:\bar Y\to Y$ be a double cover ramified along a general member of $|2H|$.
 As in \cite[2.44]{kk-singbook}, we see that
 $\o_Y(-H)$ is a direct summand of
 $\pi_*\o_{\bar Y}$, and
  $\omega_Y(H)$ is a direct summand of
 $\pi_*\omega_{\bar Y}$. Applying  \cite[2.6]{k-hdi2}
 to $\bar Y\to S$ (and possibly shrinking $S^\circ$)
 we get that $\myR^ig^\circ_*\o_{Y^\circ}(-H), 
 \myR^ig^\circ_*\omega_{Y^\circ}(H)$ are also locally free.

 The main theorem of \cite[p.174]{k-hdi2} says that all the sheaves
 in (\ref{hdi2.thm}.1) are determined by the variations of the Hodge structures on the cohomology groups   $H^i(Y_b, \c)$ and $H^i(\bar Y_b, \c)$ of the fibers  for 
 $b\in S \setminus D$,
 and this correspondence respects direct sums.
Variation of pure Hodge structures form a semi-simple category by \cite{MR0498551}, hence maps between them 
are composites of a split surjection followed by a split injection.

The coherent restriction map $H^i(Y_b, \o_{Y_b})\to H^i(H_b, \o_{H_b})$ is induced by the topological restriction map $H^i(Y_b, \c)\to H^i(H_b, \c)$, which proves 
(\ref{hdi2.thm}.2) for 
$\myR^ig^\circ_*\o_{Y^\circ}\to \myR^ig^\circ_*\o_{H^\circ}$.
Serre duality now gives the claim for
$ \myR^ig^\circ_*\omega_{H^\circ}\to
 \myR^{i+1}g^\circ_*\omega_{Y^\circ}$.

 Finally (\ref{hdi2.thm}.3) for $\omega_Y$ is a special case of
 \cite[3.1]{k-hdi2}. Since  the $\myR^{i+1}g^\circ_*\omega_{Y^\circ}$ are locally free, duality gives that
$\myR g^\circ_*\o_{Y^\circ}\simeq \tsum\myR^ig^\circ
   _*\o_{Y^\circ}[-i]$. 
 \qed

 \begin{rem} \label{hdi2.thm.rem} Note that while  every sheaf in the  long exact sequences 
  $$
  \begin{array}{l}
 0\to g^\circ_*\omega_{Y^\circ}\to g^\circ_*\omega_{Y^\circ}(H^\circ)\to g^\circ_*\omega_{H^\circ}\to
 \myR^1g^\circ_*\omega_{Y^\circ}\to \cdots,  \qtq{and}\\[1ex]
 0\to g^\circ_*\o_{Y^\circ}(-H^\circ)\to g^\circ_*\o_{Y^\circ}\to g^\circ_*\o_{H^\circ}\to
 \myR^1g^\circ_*\o_{Y^\circ}(-H^\circ)\to \cdots
  \end{array}
  \eqno{(\ref{hdi2.thm.rem}.1)}
  $$
  comes from a variation of pure Hodge structures,
  for $g^\circ_*\omega_{Y^\circ}(H^\circ) $ and $ g^\circ_*\o_{Y^\circ}(-H^\circ)$
  this identification is somewhat artificial,  and some of the maps between the coherent higher direct images  naturally arise from maps of mixed Hodge structures.
Since variations of  mixed Hodge structures  do not  form a semi-simple category, 
 the other maps in the sequences (\ref{hdi2.thm.rem}.1) need not   split.

 As an example, let  $E_i$ be  elliptic curves and $L_i$ line bundles of degrees $d_i\geq 2$. Set $S:=E_1\times E_2$, $\pi:S\to E_1$ the projection, and  $C\in |L_1\boxtimes L_2|$. Then we get
 $$
 \o_{E_1}\cong \pi_*\omega_{S/E_1}\to \pi_*\omega_{S/E_1}(C)\cong  \oplus_1^{d_2}L_1,
 $$
 which has no splitting.
 \end{rem}

  \begin{say}[Proof of Theorem~\ref{lf.thm}]\label{lf.thm.pf}
 The claims in (\ref{lf.thm}.1) are local, we may thus assume that $S$ is affine.
 The proof is by induction on the relative dimension.
 If the relative dimension is 0, then
 $g_*\o_X$ is flat, hence locally free, and
 $g_*\omega_{X/S}$ is its dual, hence also locally free.

 For relative dimension $\geq 1$, let $H\subset X$ be a general, sufficiently ample divisor.
 Then $H$ also has rational singularities, and $g:H\to S$ is also
 flat. 
 Pushing forward
 $$
 0\to \omega_{X/S}\to \omega_{X/S}(H)\to \omega_{H/S}\to 0
 \eqno{(\ref{lf.thm.pf}.1)}$$
 we get isomorphisms
 $$
 \myR^ig_*\omega_{H/S}\cong  \myR^{i+1}g_*\omega_{X/S} \qtq{for } i\geq 1,
 \eqno{(\ref{lf.thm.pf}.2)}$$
 and an exact sequence
 $$
 0\to g_*\omega_{X/S}\to g_*\omega_{X/S}(H)\to g_*\omega_{H/S}\to
 \myR^1g_*\omega_{X/S}\to 0.
 \eqno{(\ref{lf.thm.pf}.3)}
 $$
 Induction and (\ref{lf.thm.pf}.2) show that the
 $\myR^{i+1}g_*\omega_{X/S}$ are locally free for $i\geq 1$.

 In (\ref{lf.thm.pf}.3) 
 we know that $g_*\omega_{X/S}(H)$ is locally free  (since $\omega_{X/S}$ is flat over $S$) and $g_*\omega_{H/S}$ is locally free  (by induction).
 Also, $g_*\omega_{X/S}$ is  $S_2$ (since  $\omega_{X/S}$ is $S_2$ and $g$ is equidimensional).

 We apply Theorem~\ref{hdi2.thm}  to
 get $g^\circ: X^\circ\to S^\circ$ such  that 
$
\delta^\circ: g^\circ_*\omega_{H^\circ/S^\circ} \onto \myR^1g^\circ_*\omega_{X^\circ/S^\circ}
$
 has a splitting.

 A splitting of an $S_2$  sheaf over a large open set extends to a splitting by (\ref{split.lem}.2). Thus $g_*\omega_{H/S}$ decomposes as
 $$
 g_*\omega_{H/S}\cong E_1+E_2,
 $$
  where  $\delta^\circ$ is $0$ on $E_1^\circ$, and maps $E_2^\circ$ isomorphically onto
 $\myR^1g^\circ_*\omega_{X^\circ/S^\circ}$.
  Thus $\myR^1g_*\omega_{X/S}$ is  isomorphic to the direct sum of $E_2$ and a  (possibly nonzero) torsion quotient of  $E_1$. Let us now posit that  the following special case of
  (\ref{lf.thm}.4) holds in our situation.

  \medskip
      {\it Assumption \ref{lf.thm.pf}.4.}
      $\myR^1g_*\omega_{X}\cong  \omega_S\otimes \myR^1g_*\omega_{X/S}$.

  \medskip
  
 If this holds, then, by the Nakayama lemma, a nonzero torsion summand
 of $\myR^1g_*\omega_{X/S}$ 
 would give a nonzero torsion summand  of $\myR^1g_*\omega_{X}$, but the latter is torsion-free  by \cite[2.1]{k-hdi1}.
 Thus $\myR^1g_*\omega_{X/S}\cong E_2$ is locally free, and then so is
$g_*\omega_{X/S}$.

In particular,  $s\to h^i(X_s, \omega_{X_s})$ is locally constant for every $i$. Since each $X_s$ is CM, each 
$s\to h^i(X_s, \o_{X_s})$ is locally constant for every $i$ by Serre duality. Thus the $\myR^ig_*\o_{X}$ are locally free by Grauert's theorem.
(See also  (\ref{dual.elem.say}.1) for a stronger version of this argument.)

For the proof of (\ref{lf.thm}.2--4) $S$ is any variety (not necessarily affine). 
Then   (\ref{lf.thm}.2) follows from  (\ref{lf.thm}.1)
by \cite[3.12]{k-hdi2}. Duality then  gives (\ref{lf.thm}.3).

 Finally (\ref{lf.thm}.3) and (\ref{cor:projection-formula}.1)  imply (\ref{lf.thm}.4).

 It remains to show that Assumption~\ref{lf.thm.pf}.4 holds.
 The easy case is when  $S$ is smooth, more generally, when
 $\omega_S$ is locally free.  (This seems to be the main case for applications.)
 Then  $\omega_X\cong\omega_{X/S}\otimes g^*\omega_S$, so 
 (\ref{lf.thm.pf}.4) holds by the  projection formula.  

If $\omega_S$ is not locally free, the proof is more complicated; see Paragraph~\ref{verify.assump}. 
  \qed
  \end{say}

  \begin{rem} \label{split.lem}
    Let $S$ be a scheme, $Z\subset S$ a closed subset,  and
    $U:=S\setminus Z$ with injection $j:U\into S$.

    Let $F,G$ be coherent sheaves on $S$ and assume that $\depth_ZG\geq 2$. Note that the latter holds if $G$ is $S_2$, and $Z\subset S$ has codimension $\geq 2$.

    Then
    $\Hom_S(F, G)=\Hom_U(F|_U, G|_U)$. In particular
    \begin{enumerate}
    \item  $\ext^1_S(F, G)\into\ext^1_U(F|_U, G|_U)$ is injective\footnote{In version 1 of this preprint we incorrectly claimed that this holds for all $\ext^i$. We thank B.~Bhatt for pointing  this out.}, and
    \item if $G|_U\cong G_1\oplus G_2$, then
      $G\cong j_*G_1\oplus j_*G_2$.
    \end{enumerate}
    \end{rem}

\begin {say}[Proof of Corollary~\ref{lf.cor.2}]\label{lf.cor.2.pf} Note that $X$ is normal, hence we may harmlessly replace $S$ with its normalization.

  For (\ref{lf.cor.2}.1), we may assume that $S$ is affine, and choose a finite  $\pi:S\to S'$ where $S'$ is smooth.  Note that $g':=\pi\circ g$ is flat since $S'$ is smooth.
  We can thus apply Theorem~\ref{lf.thm} to  $g':X\to S'$, hence  $\myR^ig'_*\o_X$ is locally free, hence CM. 

  Note also that 
  $\pi_*\myR^ig_*\o_X=\myR^ig'_*\o_X$, so  $\myR^ig_*\o_X$ is also CM, and its support equals $g(X)$.
The same argument works for $\omega_X$.

Once (\ref{lf.cor.2}.1) holds, 
\cite[3.12]{k-hdi2} implies (\ref{lf.cor.2}.2),  and
(\ref{lf.cor.2}.3) was proved in \cite[3.1]{k-hdi2}. \qed
\end{say}

  \medskip
 A more elementary consequence of  Theorem~\ref{lf.thm} is the following.

 \begin{cor}\label{cor.0} Let $Y$ be a projective variety with rational singularities, and $|L|$ a basepoint-free linear system.
   Then  the $h^i(D, \o_D)$ are independent of $D\in |L|$.
 \end{cor}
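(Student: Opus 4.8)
The plan is to fit all members of $|L|$ into one flat family whose total space has rational singularities, and then read off the constancy of $h^i(D,\o_D)$ from Theorem~\ref{lf.thm}(1). Write $|L|=\p(W)$ for the subspace $W\subseteq H^0(Y,L)$ defining the system, and let $\mathcal D\subset Y\times\p(W)$ be the universal divisor, with projections $p:\mathcal D\to Y$ and $g:\mathcal D\to\p(W)$. By construction $g^{-1}([s])=\{s=0\}=D$, so the fibers of $g$ are precisely the members of $|L|$, and it suffices to show that $g$ is flat and projective and that $\mathcal D$ has rational singularities.

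The basepoint-freeness enters through the projection $p$. It says exactly that the evaluation map $W\otimes\o_Y\to L$ is surjective; hence its kernel $K$ is locally free of rank $\dim W-1$, and $\mathcal D=\p_Y(K)$ is a (Zariski-locally trivial) projective bundle over $Y$. In particular $p$ is smooth, so $\mathcal D$ inherits rational singularities from $Y$: if $r:Y'\to Y$ is a resolution, then $\mathcal D':=\mathcal D\times_Y Y'$ is smooth and $r':\mathcal D'\to\mathcal D$ is a resolution with $\myR r'_*\o_{\mathcal D'}\cong\o_{\mathcal D}$ by flat base change from $\myR r_*\o_{Y'}\cong\o_Y$. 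In particular $\mathcal D$ is Cohen--Macaulay.

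Flatness of $g$ I would obtain from miracle flatness: $\mathcal D$ is an irreducible Cohen--Macaulay variety of dimension $\dim Y+\dim W-2$, the base $\p(W)$ is regular of dimension $\dim W-1$, and every fiber $D$ is a nonzero Cartier divisor in $Y$, hence equidimensional of dimension $\dim Y-1=\dim\mathcal D-\dim\p(W)$. With $g$ flat and projective and $\mathcal D$ having rational singularities, Theorem~\ref{lf.thm}(1) shows each $\myR^ig_*\o_{\mathcal D}$ is locally free and commutes with base change. Therefore $h^i(D,\o_D)=\dim\bigl(\myR^ig_*\o_{\mathcal D}\otimes k([s])\bigr)$ is the rank of a locally free sheaf on the connected scheme $\p(W)$, and so is independent of $D$.

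The only step carrying real content is the passage from basepoint-freeness to the projective-bundle structure of $p$: this single observation simultaneously yields the rational singularities of the total space and, via miracle flatness, the flatness of $g$. Without basepoint-freeness the incidence variety could acquire bad singularities along the base locus and $g$ could fail to be flat, so this is the point where the hypothesis is genuinely used and where I would take care.
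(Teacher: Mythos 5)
Your proposal is correct and follows essentially the same route as the paper: the paper's proof likewise passes to the universal family $X\subset Y\times |L|$, observes that basepoint-freeness makes it a projective space bundle over $Y$ (hence with rational singularities), and applies Theorem~\ref{lf.thm} to the projection to $|L|$. The details you supply --- identifying the universal divisor as $\p_Y(K)$ for $K$ the kernel of the evaluation map $W\otimes\o_Y\to L$, and verifying flatness of $g$ via miracle flatness --- are precisely the points the paper leaves implicit.
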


 Proof.  If $L$ is ample, this follows from Kodaira's vanishing, but the general case seems more subtle.

 The universal family  $X\subset Y\times |L|$ is a projective space bundle over $Y$, hence it has rational singularities.  Thus
        Theorem~\ref{lf.thm} applies to the projection to $|L|$. 
 (If $Y$ is smooth, this also follows from  \cite[8.2]{k-hdi1}.) \qed
 \medskip

 \subsection*{Examples}{\ }
         \medskip

        Note that the $D$  in Corollary~\ref{cor.0} are CM, but can be reducible and nonreduced.
        Also, the total space of a  given deformation of $D$ need not have rational singularities, but the universal deformation does, and
        Theorem~\ref{lf.thm} applies to the  universal deformation.
       This raises the possibility that  Theorem~\ref{lf.thm} could also hold if the  fibers of $g:X\to S$ have only hypersurface singularities.
       The next example shows that  this is not the case, hence
        Theorem~\ref{lf.thm} cannot be a consequence of a result that assumes   properties of a single fiber only.

 \begin{exmp}\label{exmp.1}   Let  $S_0:=(x_1^4+x_2^4+x_3^4=0)\subset \p^3$ be a quartic cone, and $E$ an elliptic curve.
   We describe 2 types of deformations of $Y_0:=S_0\times E$.

   (\ref{exmp.1}.1)  Deforming $S_0$ as a quartic surface gives deformations $Y_t:=S_t\times E$.  Here Theorem~\ref{lf.thm} applies, and the
   $h^i(Y_t, \o_{Y_t})$ are independent of $t$.

   (\ref{exmp.1}.2) In Example~\ref{exmp.2} we describe another flat family  $\{Y_L: L\in \pico(E)\}$, where each $Y_L$ is an $S_0$-bundle over $E$, and $Y_0=Y_{\o_E}$. We check that
   $$
   h^2(Y_L, \o_{Y_L})= h^3(Y_L, \o_{Y_L})=
   \left\{
   \begin{array} {l}
     1 \qtq{if} L\cong \o_E, \qtq{and}\\
     0 \qtq{if} L\not\cong \o_E.
   \end{array}
   \right.
   $$

   (\ref{exmp.1}.3)  Let $S$ be any scheme, and $g:X\to S$ a  projective morphism with geometrically normal fibers. Then 
   $h^1(X_s, \o_{X_s})$ is locally constant by \cite[VI.2.7]{FGA}.
   This also holds if $g$ has  geometrically seminormal fibers.
 \end{exmp}

 \begin{exmp}\label{exmp.2}   Let $E$ be a locally free sheaf of rank $n$ over a scheme $Z$.
   Set $P:=\p_Z(E)$ with projection $\pi:P\to Z$.
   Then $\omega_{P/Z}\cong \o_{P/Z}(-n)\otimes \pi^*\det E$.
   Let $X\subset P$ be a member of $|\o_{P/Z}(n)|$. We have an exact sequence
   $$
   0\to \omega_{P/Z}\otimes \pi^*{\det}^{-1} E\to \o_P\to \o_X\to 0.
   $$
   Pushing it forward  gives that, for $n\geq 3$, 
   $$
   \pi_*\o_X\cong \o_Z,\qtq{and}
   \myR^{n-2}\pi_*\o_X\cong {\det}^{-1} E,
   $$
   and the others are 0.

   If $Z=C$ is a smooth, projective curve, then the Leray spectral sequence degenerates, and we get that
   $$
   H^{n-2}(X, \o_X)\cong H^0(C, {\det}^{-1} E),\qtq{and}
   H^{n-1}(X, \o_X)\cong H^1(C, {\det}^{-1} E).
   $$
   Consider now the case when
   $E=L\oplus\bigoplus_1^{n-1}\o_C$.

   If $H^0(C, L^m)=0$ for $0<m\leq n$, then sections of $\sym^n(E)$ come from the trivial summand,
   thus $X$ is a locally trivial family of cones in $\p^{n-1}$.
   A general $X$ is normal if $n\geq 3$, and its singular set is a section of
   $X\to C$.
Since  $\det E\cong L$, we get that
   $$
   H^{n-2}(X, \o_X)\cong H^0(C, L^{-1}),\qtq{and}
   H^{n-1}(X, \o_X)\cong H^1(C, L^{-1}).
   $$
   If $0\leq \deg L^{-1}\leq 2g(C)-2$, then the dimension of the cohomology groups $H^i(C, L^{-1})$ can change as $L$ varies.
   \end{exmp}

 These examples leave open the following.

 \begin{ques} Is there a flat, projective morphism
   $g:X\to C$ to a smooth curve such that
   \begin{enumerate}
   \item all fibers are  CM and reduced,
   \item the generic fiber is smooth, and
   \item  $h^1(X_c, \o_{X_c})$ is not constant.
   \end{enumerate}
   \end{ques}

 \begin{exmp}\label{h2.tor.exmp} Let $F$ be a smooth, projective surface, and $L$ a very ample line bundle with generating sections $s_1, s_2, s_3$.
   Set $$
   X:=(\tsum s_ix_i=0)\subset F\times \a^3_{\mathbf x}.
   $$
   Projection to $F$ shows that $X$ is an $\a^2$-bundle over $F$, hence smooth.   The fibers of the projection $\pi:X\to \a^3$ are 1-dimensional, save $\pi^{-1}(0,0,0)\cong F$.   Thus  $R^2\pi_*\o_X$ is supported at the origin, and there is a surjection $R^2\pi_*\o_X\onto H^2(F, \o_F)$.
 \end{exmp}

 \subsection*{Dualizing sheaves}{\ }
 \medskip

  In this section  we allow
 $S$ to be any noetherian scheme that has a dualizing complex
 \cite[\href{https://stacks.math.columbia.edu/tag/0A7B}{Tag
    0A7B}]{stacks-project}.

 \begin{say}\label{dual.elem.say}
   Let $g:X\to S$ be a flat, proper  morphism of schemes of pure relative dimension $n$, 
   with CM fibers.  Using Serre duality  over all Artinian subschemes of $S$ shows that if $\myR g^i_*\o_X$ is locally free, then it is dual to
   $\myR g^{n-i}_*\omega_{X/S}$. In particular
   \medskip

   \noindent{\it Claim \ref{dual.elem.say}.1.}  Under the above assumptions, $\myR g^i_*\o_X$ is locally free for every $i$ iff $\myR g^{i}_*\omega_{X/S}$ is locally free for every $i$.\qed
   \medskip

   If these hold, then Grothendieck duality gives the following.
     \medskip

   \noindent{\it Claim \ref{dual.elem.say}.2.}  $\myR g_*\o_X\simeq \sum_i\myR g^i_*\o_X[-i]$  iff $\myR g_*\omega_{X/S}\simeq \sum_i\myR g^i_*\omega_{X/S}[-i]$. \qed
 \end{say}

 Our aim is to establish a similar relationship between
 $\myR g_*\omega_{X/S}$ and $\myR g_*\omega_{X}$.
 The main result is Theorem~\ref{locally.omega.thm}, but we need some preliminaries.

\begin{lem}\label{lem:ltensor}
  Let $g:X\to S$ be a morphism of schemes that is flat and locally of finite
  presentation. Then
  \[
    \omega_X^\kdot\simeq\omega_{X/S}^\kdot\lotimes \myL g^*\omega_S^\kdot.
  \]
\end{lem}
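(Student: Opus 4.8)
The plan is to read the statement inside the formalism of the twisted inverse image (upper-shriek) functor. Since $S$ carries a dualizing complex $\omega_S^\kdot$ and $g$ is locally of finite presentation, the absolute dualizing complex on $X$ is, by the standard normalization, $\omega_X^\kdot = g^{!}\omega_S^\kdot$, while the relative dualizing complex is $\omega_{X/S}^\kdot = g^{!}\o_S$. With these identifications the assertion becomes the projection formula
\[
g^{!}M \simeq \myL g^* M \lotimes_{\o_X} g^{!}\o_S,
\]
evaluated at $M=\omega_S^\kdot$; equivalently, that the canonical comparison map $\myL g^* M \lotimes g^{!}\o_S \to g^{!}M$ is an isomorphism for $M = \omega_S^\kdot$.

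Since a quasi-isomorphism of complexes may be tested locally on $X$, I would first reduce to the case where $X$ and $S$ are both affine (and $S$ Noetherian with a dualizing complex, as in the standing hypotheses of this section). Then $g$ is separated and of finite type, so it admits a compactification $g = \bar g \circ \iota$ with $\iota$ an open immersion and $\bar g$ proper; moreover, being flat and of finite presentation, $g$ has Tor-dimension $0$, so it is a perfect morphism and the same holds for $\bar g$.

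Next I would establish the projection formula in two stages. For $M$ a perfect complex the canonical map is an isomorphism by a thick-subcategory argument: it is patently an isomorphism for $M = \o_S$ (where both sides reduce to $g^{!}\o_S$), and both sides are exact (triangulated) functors of $M$, hence agree on the thick subcategory generated by $\o_S$, namely $D_{\mathrm{perf}}(S)$. To pass from perfect $M$ to the dualizing complex $\omega_S^\kdot \in D^b_{\mathrm{coh}}(S)$ --- which is in general \emph{not} perfect --- I would invoke Grothendieck--Neeman duality: for the proper perfect morphism $\bar g$ the right adjoint $\bar g^{\times}$ commutes with arbitrary coproducts, whence the projection map is an isomorphism for every object of $D_{\mathrm{qc}}(S)$; restricting along $\iota$ (whose $\myL\iota^*$ is monoidal and satisfies $\iota^{!}=\myL\iota^*$) then yields the formula for $g^{!}$ on all of $D_{\mathrm{qc}}(S)$, in particular on $\omega_S^\kdot$. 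Alternatively one may simply quote the compatibility of the relative dualizing complex of a flat, finitely presented morphism with arbitrary pullback.

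The step I expect to be the main obstacle is precisely this last extension from perfect complexes to the non-perfect dualizing complex $\omega_S^\kdot$. This is where the flatness hypothesis does the essential work: it forces $g$ to be a perfect morphism, so that $g^{!}$ enjoys the continuity (commutation with coproducts) needed to upgrade the elementary perfect-complex case to a statement valid on all of $D_{\mathrm{qc}}(S)$. Once that continuity is available, the isomorphism is forced by its validity on the single generator $\o_S$.
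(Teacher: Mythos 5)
Your reading of the statement is the right one, and your two-stage plan (perfect complexes $M$ by a thick-subcategory argument, then all of $D_{\mathrm{qc}}(S)$ by the coproduct/Grothendieck--Neeman argument, both legitimate after reducing to affine $S$, where $\o_S$ generates) would indeed prove $g^!M \simeq \myL g^*M \lotimes g^!\o_S$ once you have a \emph{proper, perfect} morphism to feed into it. The genuine gap is the sentence ``it is a perfect morphism and the same holds for $\bar g$.'' Perfectness (finite Tor-dimension) is \emph{not} inherited by a Nagata compactification: $\bar g$ agrees with $g$ only on the open piece $X \subset \bar X$, and the structure of $\bar X$ along the boundary $\bar X \setminus X$ is completely uncontrolled, so over a singular base the boundary points can have infinite Tor-dimension over $\o_S$. (Over a regular base every finite-type morphism is perfect, which is perhaps why the step looks harmless.) Since your Grothendieck--Neeman step is applied to $\bar g$, not to $g$, the argument as written does not go through. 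The closing alternative --- quoting base-change compatibility of $\omega_{X/S}^\kdot$ --- is also not a substitute: that concerns pullback along $S' \to S$, which is a different statement from the lemma.

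The gap is fixable while keeping your strategy: after reducing to $X$ and $S$ affine, choose a closed immersion $i\colon X \into \a^n_S$ and factor $g$ as $i$ followed by the open immersion $j\colon \a^n_S \into \p^n_S$ followed by $\bar p\colon \p^n_S \to S$. Then $\bar p$ is smooth and proper, hence perfect; and $i$ is a perfect \emph{proper} morphism, because $g$ is perfect and $\a^n_S \to S$ is smooth (perfectness can be checked after composition with a smooth morphism). Your argument applies to $\bar p$ and to $i$ separately, and the resulting formulas compose since $j^! = \myL j^*$ is monoidal; this yields $g^!M \simeq \myL g^*M \lotimes g^!\o_S$, hence the lemma. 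Alternatively, you could cite this projection-type formula for perfect morphisms as a known theorem (Lipman--Neeman, or the Stacks Project). For comparison, the paper's own proof avoids compactifications and the $g^!$-formalism for general $M$ entirely: it writes $\omega_{X/S}^\kdot \simeq \myR\sHom_X(\myL g^*\omega_S^\kdot, \omega_X^\kdot)$ (Conrad's formula), applies tensor-hom adjunction, uses $\myR\sHom_X(\omega_{X/S}^\kdot, \omega_{X/S}^\kdot) \simeq \o_X$ for flat, finitely presented $g$ --- the one place where flatness enters --- and concludes by biduality with respect to the dualizing complex $\omega_X^\kdot$. That route is shorter but rests on the same depth of input; yours, once repaired, proves the stronger statement for every $M \in D_{\mathrm{qc}}(S)$.
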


\begin{proof}
  As in \cite[3.3.6]{Conrad00},  the exceptional inverse image functor can be written as 
  \[
    g^!(\blank ) \simeq \myR\sHom_X\left(\myL g^*\myR\sHom_S(\blank ,
      \omega_S^\kdot), \omega_X^\kdot\right).
  \]
  In particular,
  $$
    \omega_{X/S}^\kdot=g^!\o_S\simeq \myR\sHom_X\left(\myL
    g^*\omega_S^\kdot,\omega_X^\kdot\right).
    \eqno{(\ref{lem:ltensor}.1)}
  $$
  Then
  $$
  %% \label{eq:2o}
  \begin{array}{rcl}
    \myR\sHom_X(\omega_{X/S}^\kdot\lotimes \myL
    g^*\omega_S^\kdot,\omega_X^\kdot)&\simeq& \myR\sHom_X(\omega_{X/S}^\kdot ,
    \myR\sHom(\myL
    g^*\omega_S^\kdot,\omega_X^\kdot))\\
    &\simeq&\myR\sHom_X(\omega_{X/S}^\kdot,
    \omega_{X/S}^\kdot)
    \end{array}
     \eqno{(\ref{lem:ltensor}.2)}
 $$
  by \cite[\href{https://stacks.math.columbia.edu/tag/08DJ}{Tag
    08DJ}]{stacks-project} and (\ref{lem:ltensor}.1).  Furthermore,
  $$
    %%\label{eq:3o}
    \myR\sHom_X(\omega_{X/S}^\kdot, \omega_{X/S}^\kdot)\simeq\o_X    
    \eqno{(\ref{lem:ltensor}.3)}
 $$
  by \cite[\href{https://stacks.math.columbia.edu/tag/0E2V}{Tag
    0E2V}]{stacks-project}.
  Then  (\ref{lem:ltensor}.2--3)
    imply that 
  $$
   %% \label{eq:4o}
    \omega_X^\kdot\simeq \myR\sHom_X(\o_X,\omega_X^\kdot)\simeq
    \myR\sHom_X\left(\myR\sHom_X(\omega_{X/S}^\kdot\lotimes \myL
      g^*\omega_S^\kdot,\omega_X^\kdot),\omega_X^\kdot\right).
      \eqno{(\ref{lem:ltensor}.4)}
  $$
  As $\omega_X^\kdot$ is a dualizing complex, (\ref{lem:ltensor}.4)
    implies the desired
  statement.
\end{proof}

\begin{cor}\label{cor.lem:ltensor}
  In addition to the assumptions in Lemma~\ref{lem:ltensor},  assume that  $X$ is CM. Then
  \[
    \omega_X\simeq\omega_{X/S}\otimes g^*\omega_S.
  \]
\end{cor}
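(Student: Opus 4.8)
The plan is to descend the derived-category isomorphism of Lemma~\ref{lem:ltensor},
$$\omega_X^\kdot\simeq\omega_{X/S}^\kdot\lotimes \myL g^*\omega_S^\kdot,$$
to an isomorphism of coherent sheaves. The mechanism is that once $X$ is CM all three dualizing complexes appearing here become (locally) sheaves concentrated in a single degree, and the derived tensor product collapses to an ordinary one; extracting the unique nonzero cohomology sheaf of each side then produces the stated formula.

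First I would read off the local-algebra consequences of the hypotheses. For $x\in X$ with image $s=g(x)$ the localization $\o_{S,s}\to\o_{X,x}$ is a flat local homomorphism of Noetherian local rings whose target $\o_{X,x}$ is CM. By flat local descent of Cohen--Macaulayness, both $\o_{S,s}$ and the fiber ring $\o_{X,x}\otimes_{\o_{S,s}}k(s)$ are then CM. Hence $S$ is CM along $g(X)$ and every fiber of $g$ is CM, so $g$ is a Cohen--Macaulay morphism. This yields the three concentration statements: $\omega_X^\kdot$ is a shifted sheaf because $X$ is CM; $\omega_S^\kdot$ is a shifted sheaf along $g(X)$, and since $g$ is flat $\myL g^*\omega_S^\kdot=g^*\omega_S^\kdot$ is the shifted sheaf $g^*\omega_S$; and the relative dualizing complex of the Cohen--Macaulay morphism $g$ is a shifted sheaf $\omega_{X/S}^\kdot\simeq\omega_{X/S}[d]$ (locally, with $d$ the relative dimension) whose term $\omega_{X/S}$ is \emph{flat over $S$}.

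The key step is that this $S$-flatness forces the derived tensor product to be underived. Since $g^*\omega_S^\kdot$ is pulled back from $S$, a change-of-rings identification combined with the flatness of $\omega_{X/S}$ over $S$ shows that no higher $\operatorname{Tor}$ occurs in $\omega_{X/S}^\kdot\lotimes g^*\omega_S^\kdot$. Thus this derived tensor product is the ordinary tensor product of two shifted sheaves, hence again a shifted sheaf with single nonzero cohomology $\omega_{X/S}\otimes g^*\omega_S$. Comparing it with $\omega_X^\kdot$, whose single nonzero cohomology is $\omega_X$, and checking that the shifts match via the additivity $\dim X=\dim S+d$, gives the desired isomorphism $\omega_X\cong\omega_{X/S}\otimes g^*\omega_S$.

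The main obstacle, and really the heart of the matter, is the flatness of the relative dualizing sheaf $\omega_{X/S}$ over $S$: this is exactly what annihilates the higher $\operatorname{Tor}$-terms and allows the derived tensor product to collapse to a tensor of sheaves. Everything else is the standard dictionary between the Cohen--Macaulay property and the concentration of a dualizing complex in a single degree, which I would apply locally on $X$, where the relative dimension $d$ is constant, and then glue.
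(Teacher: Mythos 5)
Your proof is correct, and it follows the paper's overall strategy: deduce from the flatness of $g$ and the CM property of $X$ that $g$ is a CM morphism and $S$ is CM, concentrate $\omega_X^\kdot$, $\omega_S^\kdot$ and $\omega_{X/S}^\kdot$ into single sheaves in the appropriate degrees (Conrad, Thm.~3.5.1 for the relative one), replace $\myL g^*$ by $g^*$ using flatness, and feed everything into Lemma~\ref{lem:ltensor}. The one point where you genuinely diverge is how the derived tensor product is shown to be underived. You establish Tor-independence \emph{a priori}: $\omega_{X/S}$ is flat over $S$ (also part of Conrad's Thm.~3.5.1), and the flat change-of-rings isomorphism $\tor_i^{\o_X}\bigl(\omega_{X/S},\, g^*\omega_S\bigr)\cong \tor_i^{\o_S}\bigl(\omega_{X/S},\, \omega_S\bigr)$ then kills all higher Tor sheaves. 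The paper instead obtains Tor-independence \emph{a posteriori}, with no extra input: since the left-hand side of the isomorphism $\omega_X\simeq \omega_{X/S}\lotimes g^*\omega_S$ is a single sheaf, every cohomology sheaf of the right-hand side other than the top one --- and these are exactly the higher Tor sheaves --- must vanish, while the top one is the ordinary tensor product by right exactness of $\otimes$. Your route costs an additional nontrivial citation (the $S$-flatness of the relative dualizing sheaf) but isolates the structural reason for the vanishing, and it would still apply in a situation where one knows that flatness but does not yet know the left-hand side is a sheaf; the paper's argument is shorter and needs nothing beyond what is already established.
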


\begin{proof}
  As $X$ is CM and $g$ is flat, it follows that $g$ is a CM morphism and $S$ is
  also CM by \cite[\href{https://stacks.math.columbia.edu/tag/045J}{Tag
    045J}]{stacks-project}. This implies that $\omega_X^\kdot\simeq\omega_X[\dim X]$
  and $\omega_S^\kdot\simeq\omega_S[\dim S]$. Also,  
  $\omega_{X/S}^\kdot\simeq\omega_{X/S}[\dim X-\dim S]$ by \cite[Thm.~3.5.1]{Conrad00}. As $g$ is flat, $g^*$ is
  exact, so $\myL g^*\simeq g^*$. Therefore Lemma~\ref{lem:ltensor} implies that
  $$
  \omega_X\simeq\omega_{X/S}\lotimes g^*\omega_S.
   \eqno{(\ref{cor.lem:ltensor}.1)}
  $$
  Finally, as the left hand side of (\ref{cor.lem:ltensor}.1) is a single sheaf, it follows
  that $\omega_{X/S}$ and $g^*\omega_S$ are Tor-independent, so
  $\omega_{X/S}\lotimes g^*\omega_S\simeq\omega_{X/S}\otimes g^*\omega_S$, and the
  statement follows.
\end{proof}

\begin{cor}\label{cor:projection-formula}
  Under the same assumptions as in Corollary~\ref{cor.lem:ltensor}, we have that
  $$
  \myR g_*\omega_X\simeq\myR g_*\omega_{X/S}\lotimes \omega_S.
  \eqno{(\ref{cor:projection-formula}.1)}
  $$
\end{cor}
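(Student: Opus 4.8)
The plan is to obtain this immediately from Corollary~\ref{cor.lem:ltensor} together with the projection formula for $\myR g_*$. The proof of Corollary~\ref{cor.lem:ltensor} produced a canonical isomorphism $\omega_X\simeq\omega_{X/S}\otimes g^*\omega_S$ and, along the way, showed that $\omega_{X/S}$ and $g^*\omega_S$ are Tor-independent over $\o_X$. Since $g$ is flat we have $\myL g^*\simeq g^*$, so this ordinary tensor product coincides with the derived one. Hence the first step is simply to record the reformulation
$$
\omega_X\simeq\omega_{X/S}\lotimes \myL g^*\omega_S,
$$
now phrased entirely inside the derived category, where the projection formula applies.

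Next I would invoke the derived projection formula \cite[\href{https://stacks.math.columbia.edu/tag/0B54}{Tag 0B54}]{stacks-project} for the (quasi-compact, quasi-separated) morphism $g$, applied to the sheaf $\omega_{X/S}$ on $X$ and $\omega_S$ on $S$. This yields
$$
\myR g_*\bigl(\omega_{X/S}\lotimes \myL g^*\omega_S\bigr)\simeq \myR g_*\omega_{X/S}\lotimes \omega_S.
$$
Substituting the previous display on the left gives $\myR g_*\omega_X\simeq \myR g_*\omega_{X/S}\lotimes \omega_S$, which is exactly (\ref{cor:projection-formula}.1). Note that no finiteness or perfectness hypothesis on $\omega_S$ is required, because the projection formula in this generality holds for an arbitrary complex on the base.

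I do not expect a genuinely hard step here; the work is entirely in the bookkeeping. The one point that deserves care is making sure the underived tensor product of Corollary~\ref{cor.lem:ltensor} may legitimately be replaced by $\lotimes$ before the projection formula is applied---this is precisely the Tor-independence recorded above, together with the flatness of $g$. A secondary point is confirming that $g$ meets the quasi-compactness and quasi-separatedness hypotheses under which the projection formula is an isomorphism; this is automatic in the projective situation that is the focus of the paper. Alternatively, one could run the same argument one level up, applying the projection formula to the isomorphism $\omega_X^\kdot\simeq\omega_{X/S}^\kdot\lotimes\myL g^*\omega_S^\kdot$ of Lemma~\ref{lem:ltensor} and then using the CM shifts $\omega_X^\kdot\simeq\omega_X[\dim X]$, $\omega_S^\kdot\simeq\omega_S[\dim S]$, and $\omega_{X/S}^\kdot\simeq\omega_{X/S}[\dim X-\dim S]$ to descend to the single-sheaf statement; the shifts cancel consistently.
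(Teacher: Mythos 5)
Your proof is correct and is essentially the paper's own argument: the paper likewise deduces (\ref{cor:projection-formula}.1) by combining Lemma~\ref{lem:ltensor} and Corollary~\ref{cor.lem:ltensor} (i.e., the identification $\omega_X\simeq\omega_{X/S}\lotimes\myL g^*\omega_S$, valid since $g$ is flat and Tor-independence holds) with the derived projection formula, citing \cite[\href{https://stacks.math.columbia.edu/tag/08EU}{Tag 08EU}]{stacks-project} where you cite Tag 0B54. Your additional remarks on replacing $\otimes$ by $\lotimes$, on $\myL g^*\simeq g^*$, and on the quasi-compact and quasi-separated hypotheses merely make explicit what the paper dismisses as ``straightforward.''
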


\begin{proof}
  This is straightforward from  Corollaries~\ref{lem:ltensor}--\ref{cor.lem:ltensor} using
  the derived projection formula
  \cite[\href{https://stacks.math.columbia.edu/tag/08EU}{Tag 08EU}]{stacks-project}.
\end{proof}

 \begin{thm}\label{locally.omega.thm}
   Let $g:X\to S$ be a flat, proper  morphism of schemes of pure relative dimension $n$, 
   with CM fibers. Assume that $S$ is CM and has a dualizing sheaf $\omega_S$.  Then the following are
  equivalent.
  \begin{enumerate}
   \item $\myR^ig_*\omega_{X/S}$ is locally free for every $i$.
   \item  $\myR^ig_*\omega_{X}\simeq\omega_S\otimes\myR^ig_*\omega_{X/S}$,   and  $\myR^ig_*\omega_{X/S}$ is locally free for every $i$.
      \item  $\myR^ig_*\omega_{X}\simeq\omega_S\otimes E_i$   for every $i$, for some locally free sheaf $E_i$.
  \end{enumerate}
  Furthermore, if these hold, then the following are
  equivalent.
  \begin{enumerate}\setcounter{enumi}{3}
    \item  $\myR g_*\omega_{X/S}\simeq \sum_i\myR g^i_*\omega_{X/S}[-i]$.
 \item  $\myR g_*\omega_{X}\simeq \sum_i\myR g^i_*\omega_{X}[-i]$.
 \end{enumerate}
\end{thm}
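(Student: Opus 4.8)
The plan is to use the derived projection formula of Corollary~\ref{cor:projection-formula} as the main engine, together with Grothendieck--Serre duality. First note that since $g$ is flat with CM fibers and $S$ is CM, the total space $X$ is CM, so Corollaries~\ref{cor.lem:ltensor}--\ref{cor:projection-formula} apply; in particular $\myR g_*\omega_X\simeq \myR g_*\omega_{X/S}\lotimes\omega_S$. I would prove the first block of equivalences in the cycle $(1)\Rightarrow(2)\Rightarrow(3)\Rightarrow(1)$, and then treat $(4)\Leftrightarrow(5)$ separately, throughout writing $\dd_S:=\myR\sHom_S(\blank,\omega_S^\kdot)$ for the dualizing anti-equivalence on $D^b_{\mathrm{coh}}(S)$.

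For $(1)\Rightarrow(2)$: assuming the cohomology sheaves $\myR^ig_*\omega_{X/S}$ are locally free, they are flat, so in the hyper-Tor spectral sequence $E_2^{p,q}=\tor_{-p}^S(\myR^qg_*\omega_{X/S},\omega_S)\Rightarrow \myR^{p+q}g_*\omega_X$ computing the cohomology of $\myR g_*\omega_{X/S}\lotimes\omega_S$, every term with $p\ne 0$ vanishes. The sequence degenerates and yields $\myR^ig_*\omega_X\cong \myR^ig_*\omega_{X/S}\otimes\omega_S$, which is exactly $(2)$. The implication $(2)\Rightarrow(3)$ is immediate, with $E_i=\myR^ig_*\omega_{X/S}$.

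The substantive step is $(3)\Rightarrow(1)$, where I must recover local freeness of $\myR^ig_*\omega_{X/S}$ from information about $\myR^ig_*\omega_X$, i.e. essentially ``cancel'' the derived tensor with the possibly non-invertible sheaf $\omega_S$. Here I would pass to $\o_X$ by duality. Grothendieck duality gives $\dd_S(\myR g_*\o_X)\simeq\myR g_*\omega_X^\kdot$; applying $\dd_S$, using biduality, and keeping track of the shifts (relative dimension $n$, $\dim S=d$, $\omega_X^\kdot=\omega_X[n+d]$, $\omega_S^\kdot=\omega_S[d]$), this rearranges to $\myR g_*\o_X[n]\simeq\myR\sHom_S(\myR g_*\omega_X,\omega_S)$. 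The key point is that $\omega_S$ is rigid: biduality applied to $\o_S$ gives $\myR\sHom_S(\omega_S^\kdot,\omega_S^\kdot)\simeq\o_S$, and as $S$ is CM this says $\myR\sHom_S(\omega_S,\omega_S)\simeq\o_S$, so $\sExt^p_S(\omega_S,\omega_S)=0$ for $p>0$. Assuming $(3)$, the hyperext spectral sequence $E_2^{p,q}=\sExt^p_S(\myR^{-q}g_*\omega_X,\omega_S)\Rightarrow H^{p+q}\myR\sHom_S(\myR g_*\omega_X,\omega_S)$ has $E_2^{p,q}=\sExt^p_S(\omega_S\otimes E_{-q},\omega_S)=E_{-q}^\vee\otimes\sExt^p_S(\omega_S,\omega_S)$ concentrated in $p=0$; it degenerates and shows that each $\myR^jg_*\o_X$ is locally free. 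Claim~\ref{dual.elem.say}.1 then returns local freeness of all $\myR^ig_*\omega_{X/S}$, giving $(1)$.

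Finally, for $(4)\Leftrightarrow(5)$, now assuming $(1)$--$(3)$: Grothendieck duality identifies $\dd_S(\myR g_*\o_X)\simeq\myR g_*\omega_X^\kdot=\myR g_*\omega_X[n+d]$, and $\dd_S$ is an involutive anti-equivalence that visibly carries a split object $\bigoplus_i A_i[-i]$ to the split object $\bigoplus_i\dd_S(A_i)[i]$. Hence $\myR g_*\o_X$ is formal if and only if $\myR g_*\omega_X$ is formal, i.e. $(5)$ holds iff $\myR g_*\o_X\simeq\bigoplus_j\myR^jg_*\o_X[-j]$; by Claim~\ref{dual.elem.say}.2 the latter is equivalent to $(4)$. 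The forward direction $(4)\Rightarrow(5)$ can alternatively be seen directly, by tensoring the splitting of $\myR g_*\omega_{X/S}$ with $\omega_S$ and using that each $\myR^ig_*\omega_{X/S}$ is locally free, so the derived tensor is the ordinary one and is identified with $\myR^ig_*\omega_X$ via $(2)$. The main obstacle throughout is the $(3)\Rightarrow(1)$ step: inverting a derived tensor with a non-locally-free $\omega_S$ is only possible because of the rigidity $\myR\sHom_S(\omega_S,\omega_S)\simeq\o_S$, and some care with the duality shifts is needed in order to land on the correct sheaves.
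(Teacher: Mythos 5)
Your proof is correct, but it takes a genuinely different route from the paper's in both substantive steps, essentially by dualizing to the structure-sheaf side. For $(1)\Rightarrow(2)$ the paper first splits $\myR g_*\omega_{X/S}\simeq\tsum_i\myR^ig_*\omega_{X/S}[-i]$ over affine $S$ (Lemma~\ref{split.lem.2}) and then applies (\ref{cor:projection-formula}.1) termwise, whereas your Tor spectral sequence extracts $(2)$ directly from flatness of the cohomology sheaves, with no splitting and no reduction to affine $S$. For $(3)\Rightarrow(1)$ the paper splits $\myR g_*\omega_X$ using part (\ref{split.lem.2}.2) of the same lemma --- that is where the rigidity $\sExt^{>0}_S(\omega_S,\omega_S)=0$ enters --- and then applies $\myR\sHom_S(\omega_S,\blank)$, identifying the result with $\myR g_*\omega_{X/S}\simeq\tsum_i E_i[-i]$ via \cite[3.3.6]{Conrad00} and \cite[Tag 08DJ]{stacks-project}, so it lands directly on $\myR^ig_*\omega_{X/S}\cong E_i$; you instead pass to $\myR g_*\o_X[n]\simeq\myR\sHom_S(\myR g_*\omega_X,\omega_S)$, run the Ext spectral sequence with the same rigidity input to get local freeness of the $\myR^jg_*\o_X$, and return to $(1)$ through the fiberwise Serre duality statement (\ref{dual.elem.say}.1). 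Both arguments thus rest on the same two pillars, (\ref{cor:projection-formula}.1) and rigidity of $\omega_S$; your detour through $\o_X$ replaces the splitting lemma by standard hypercohomology spectral sequences and yields local freeness of the $\myR^jg_*\o_X$ as a byproduct, while the paper's route produces the isomorphism in $(2)$ and the identification of the $E_i$ in one stroke, and its Lemma~\ref{split.lem.2} is reused elsewhere in the paper. The same contrast holds for $(4)\Leftrightarrow(5)$: the paper deduces it from the two isomorphisms $\myR g_*\omega_X\simeq\omega_S\lotimes\myR g_*\omega_{X/S}$ and $\myR g_*\omega_{X/S}\simeq\myR\sHom_S(\omega_S,\myR g_*\omega_X)$, while you transport formality through $\myR g_*\o_X$ and invoke (\ref{dual.elem.say}.2). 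One point you should make explicit there: an anti-equivalence does not in general carry a formal complex to a formal one, since the dual of a coherent sheaf is usually a genuine complex; your argument is valid only because, under $(1)$--$(3)$, rigidity gives $\myR\sHom_S(\omega_S\otimes E_i,\omega_S^\kdot)\simeq E_i^\vee$ up to shift and the duals of the locally free sheaves $\myR^jg_*\o_X$ are shifted sheaves, so the relevant dual complexes really are direct sums of shifted sheaves. With that sentence added, your argument is complete.
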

 
 Proof. Note that $X$ is CM and has a dualizing sheaf $\omega_X\cong g^!\omega_S$ \cite[\href{https://stacks.math.columbia.edu/tag/0AA3
[stacks.math.columbia.edu]}{Tag0AA3}]{stacks-project}.

 For (\ref{locally.omega.thm}.1--3) we may assume that $S$ is affine. Assume that (\ref{locally.omega.thm}.1) holds.
 Then $\myR g_*\omega_{X/S}\simeq \sum_i\myR g^i_*\omega_{X/S}[-i]$ by Lemma~\ref{split.lem.2}, 
 and  (\ref{cor:projection-formula}.1) gives that
  $$
 \myR g_*\omega_X\simeq \omega_S\lotimes\myR g_*\omega_{X/S}
 \simeq \tsum_i \omega_S\otimes \myR g^i_*\omega_{X/S}[-i].
   $$
 This proves (\ref{locally.omega.thm}.2), which clearly implies
 (\ref{locally.omega.thm}.3).

 Next assume that (\ref{locally.omega.thm}.3) holds.
 Then $\myR g_*\omega_{X}\simeq \sum_i\myR g^i_*\omega_{X}[-i]$ by Lemma~\ref{split.lem.2}.   By \cite[3.3.6]{Conrad00}
 $$
 \omega_{X/S}
 \simeq \myR\sHom_X\left(\myL g^*\omega_S,\omega_X\right),
 $$
  and then
 %%\cite[II.5.10]{MR0222093}
\cite[\href{https://stacks.math.columbia.edu/tag/08DJ}{Tag 08DJ}]{stacks-project}
 gives that
 $$
 \myR g_*\omega_{X/S}\simeq \myR\sHom_S\left(\omega_S,\myR
 g_*\omega_X\right)\simeq
 \tsum_i\myR\sHom_S\big(\omega_S,\myR^i g_*\omega_X[-i]\big)\simeq
 \tsum_i E_i[-i].
 $$
 Thus $\myR g^i_*\omega_{X/S}\cong E_i$, proving
 (\ref{locally.omega.thm}.1--2).

 Even if $S$ is not assumed affine, then the equivalence of (\ref{locally.omega.thm}.4--5) is implied by the isomorphisms
 $$
 \myR g_*\omega_X\simeq \omega_S\lotimes\myR g_*\omega_{X/S}
 \qtq{and}
 \myR g_*\omega_{X/S}\simeq \myR\sHom_S\left(\omega_S,\myR
 g_*\omega_X\right). \hfill \qed
 $$

 \begin{lem} \label{split.lem.2}
   Let $S$ be an affine scheme
   and  $R$  a bounded complex of quasi-coherent $\o_S$-modules.  
   Assume that
   \begin{enumerate}
   \item either the cohomology sheaves  $h^i(R)$ are all locally free,
   \item or   $S$ is CM,  has a dualizing sheaf $\omega_S$, and $h^i(R)\cong \omega_S\otimes E_i$   for every $i$, for some locally free sheaf $E_i$.
  \end{enumerate}
  Then   $R\simeq \sum h^i(R)[-i]$.
  \end{lem}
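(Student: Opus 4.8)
The plan is to prove the decomposition by a descending induction that splits off the top cohomology sheaf one degree at a time; the obstruction at each step is a single connecting map in a truncation triangle, and I will show it vanishes by reducing it to the vanishing of certain $\ext^{\ge 2}$ groups between the cohomology sheaves. Throughout I work in the bounded derived category $D(S)$ of quasi-coherent $\o_S$-modules (recall $S$ is already affine). If $R$ has at most one nonzero cohomology sheaf the claim is the tautology that a complex with cohomology in a single degree is quasi-isomorphic to that sheaf, shifted. Otherwise let $b$ be the largest index with $h^b(R)\ne 0$ and consider the truncation triangle
$$
\tau_{\le b-1}R\longrightarrow R\longrightarrow h^b(R)[-b]\overset{\delta}{\longrightarrow}(\tau_{\le b-1}R)[1].
$$
If $\delta=0$, the triangle splits, giving $R\simeq(\tau_{\le b-1}R)\oplus h^b(R)[-b]$; since $\tau_{\le b-1}R$ has as its cohomology sheaves exactly the $h^i(R)$ with $i\le b-1$, which are again all locally free in case (1), resp.\ all of the form $\omega_S\otimes E_i$ in case (2), the inductive hypothesis applies to $\tau_{\le b-1}R$ and finishes the argument.

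To force $\delta=0$ it suffices to show that $\Hom_{D(S)}\big(h^b(R),(\tau_{\le b-1}R)[b+1]\big)=0$. A dévissage along the Postnikov tower of $\tau_{\le b-1}R$: applying the cohomological functor $\Hom_{D(S)}(h^b(R),-[b+1])$ to the triangles $\tau_{\le i-1}\to\tau_{\le i}\to h^i(R)[-i]$ and inducting on the (finite) length of the filtration, reduces this vanishing to the vanishing of the graded contributions
$$
\Hom_{D(S)}\big(h^b(R),h^i(R)[\,b+1-i\,]\big)=\ext^{\,b+1-i}_S\big(h^b(R),h^i(R)\big),\qquad i\le b-1.
$$
Since $b+1-i\ge 2$ throughout, everything is reduced to proving $\ext^{\,j}_S(h^b(R),h^i(R))=0$ for all $j\ge 2$.

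In case (1) the source $h^b(R)$ is locally free of finite rank, so $\sext^{\ge 1}_S(h^b(R),-)=0$ and $\ext^{\,j}_S(h^b(R),G)=H^j\big(S,\shom(h^b(R),G)\big)$, which is $0$ for $j\ge 1$ because $S$ is affine. In case (2) write $h^b(R)=\omega_S\otimes E_b$ and $h^i(R)=\omega_S\otimes E_i$ with $E_b,E_i$ locally free of finite rank. Dualizing the locally free factor gives
$$
\ext^{\,j}_S(\omega_S\otimes E_b,\ \omega_S\otimes E_i)\cong\ext^{\,j}_S\big(\omega_S,\ \omega_S\otimes E_b^\vee\otimes E_i\big),
$$
so it is enough to see $\ext^{\,j}_S(\omega_S,\ \omega_S\otimes M)=0$ for $j\ge 1$ and $M$ locally free. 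The key input is that, since $S$ is CM, $\omega_S^\kdot$ is locally a shift of $\omega_S$, and \cite[\href{https://stacks.math.columbia.edu/tag/0E2V}{Tag 0E2V}]{stacks-project} applied to $S$ gives $\myR\sHom_S(\omega_S^\kdot,\omega_S^\kdot)\simeq\o_S$; the local shifts cancel, so $\sext^{\ge 1}_S(\omega_S,\omega_S)=0$ and $\shom(\omega_S,\omega_S)=\o_S$. Tensoring with the locally free $M$ yields $\sext^{\ge 1}_S(\omega_S,\omega_S\otimes M)=0$, and the local-to-global spectral sequence over the affine $S$ then gives $\ext^{\ge 1}_S(\omega_S,\omega_S\otimes M)=0$. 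In both cases the required $\ext^{\ge 2}$ groups vanish, hence $\delta=0$.

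The main obstacle, and the only nonformal ingredient, is the input for case (2): the rigidity $\sext^{>0}_S(\omega_S,\omega_S)=0$ of the dualizing sheaf of a Cohen--Macaulay scheme. Everything else is the standard mechanism for splitting a bounded complex off its top cohomology, where the obstructions are governed by $\ext^{\ge 2}$ groups between the cohomology sheaves, killed by local freeness in case (1) and by this rigidity together with the affineness of $S$ in case (2).
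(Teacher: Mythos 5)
Your proposal is correct and follows essentially the same route as the paper's: the obstruction to splitting off one cohomology sheaf at a time is an Ext group between the cohomology sheaves, which is killed in case (1) by local freeness (projectivity over the affine $S$) and in case (2) by the rigidity $\sExt^{m}_S(\omega_S,\omega_S)=0$ for $m\neq 0$, coming from the Stacks Project Tag 0E2V on the CM scheme $S$, together with affineness. The differences are cosmetic: you truncate from the top and use d\'evissage along the Postnikov tower, whereas the paper splits off $h^0(R)$ and invokes a hyperext spectral sequence (and, in case (1), splits directly at the chain level using projectivity of the $h^i(R)$).
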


 Proof.   Let $d_i:R^i\to R^{i+1}$ be the differentials in $R$.
 If $h^i(R)$ is locally free, then it is projective, hence
 $\ker d_i\onto h^i(R)$ has a splitting $s_i: h^i(R)\to R^i$.
 These give the required $\sum h^i(R)[-i]\simeq R$.

 For (\ref{split.lem.2}.2),  we may assume that
   $R^j=0$ for $j<0$ and write $R$ as 
 $$
 0\to h^0(R)\to R\to R_+[-1]\stackrel{+1}{\longrightarrow}
 $$
 As in \cite[\href{https://stacks.math.columbia.edu/tag/07A9}{Tag 07A9}]{stacks-project}
 there is a spectral sequence  with $E_2$-page
 $$
 E_2^{i,j}=\sExt^i_S\bigl(h^{-j}R_+[-1], h^0(R)\bigr)
 \Rightarrow \sExt^{i+j}_S\bigl(R_+[-1], h^0(R)\bigr).
 $$
 If (\ref{split.lem.2}.2) holds then
 $$
 \sExt^m_S\bigl(h^{j}(R), h^i(R)\bigr)\cong
 \sExt^m_S\bigl(\omega_S, \omega_S\bigr)\otimes \sHom_S(E_j, E_i)
 $$
 is zero whenever $m\neq 0$.  Thus the 
 spectral sequence  degenerates and
 $R\simeq h^0(R)+R_+[-1]$. Induction gives that 
 $R\simeq \sum h^i(R)[-i]$. \qed

 \subsection*{Verifying Assumption~\ref{lf.thm.pf}.4}{\ }
 \medskip

 We start with another variant of Lemma~\ref{split.lem.2}.

\begin{lem}\label{lem:coh-of-ltwist-fg}
  Let $S$ be an affine scheme, and $R$ a complex of
  quasi-coherent $\o_S$-modules such that $R^j=0$ for $j<0$. Assume that $h^j(R)$ is locally free of finite rank for $j>c$. Further let $F$ be
  an $\o_S$-module. Then
  $$
  h^c(R\lotimes F)\simeq h^c(R)\otimes F.
  $$
  \end{lem}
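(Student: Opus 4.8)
The plan is to compute $h^c(R\lotimes F)$ with the hyperTor spectral sequence, after splitting $R$ along the truncation in degree $c$. The guiding idea is that $F$ enters only through derived tensoring, and that local freeness of the cohomology sheaves above degree $c$ annihilates every Tor correction that could otherwise feed down into degree $c$. Concretely, I would start from the canonical truncation triangle
$$
\tau_{\le c}R \to R \to \tau_{\ge c+1}R \xrightarrow{+1}
$$
and apply $\blank\lotimes F$ to obtain a distinguished triangle together with its long exact cohomology sequence. The hypothesis $R^j=0$ for $j<0$ makes $\tau_{\le c}R$ a genuinely bounded complex in degrees $[0,c]$ with $h^c(\tau_{\le c}R)=h^c(R)$, while $\tau_{\ge c+1}R$ is bounded below with all its cohomology sheaves $h^j$ (for $j>c$) locally free; this clean separation is what lets me invoke convergence of the relevant spectral sequences without fuss.

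For the lower piece I would use the standard hyperTor spectral sequence $E_2^{a,b}=\tor_{-b}(h^aR,F)\Rightarrow h^{a+b}(\tau_{\le c}R\lotimes F)$, with $b\le 0$. On the total-degree-$c$ antidiagonal only the corner $E_2^{c,0}=h^c(R)\otimes F$ is nonzero, and the differentials touching this corner land in vanishing groups: those leaving it move to $a$-coordinate $>c$, where $\tau_{\le c}R$ has no cohomology, and those entering it come from $b>0$, i.e. from negative Tor indices. Hence $E_2^{c,0}=E_\infty^{c,0}$ and $h^c(\tau_{\le c}R\lotimes F)\cong h^c(R)\otimes F$ — really just right-exactness of $\otimes$ at the top degree. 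For the upper piece the same spectral sequence applied to $\tau_{\ge c+1}R$ has $E_2^{a,b}=\tor_{-b}(h^aR,F)$ with $h^aR$ locally free for every $a\ge c+1$, so $\tor_{-b}(h^aR,F)=0$ for all $b<0$; the sequence collapses onto the edge $b=0$ and gives $h^n(\tau_{\ge c+1}R\lotimes F)\cong h^n(R)\otimes F$, which is $0$ for $n\le c$. In particular both $h^{c-1}$ and $h^c$ of $\tau_{\ge c+1}R\lotimes F$ vanish.

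Plugging these into the long exact sequence of the triangle leaves $0\to h^c(\tau_{\le c}R\lotimes F)\to h^c(R\lotimes F)\to 0$, whence $h^c(R\lotimes F)\cong h^c(R)\otimes F$. The only step with real content — and the one I would write out most carefully — is the vanishing of the off-edge terms for $\tau_{\ge c+1}R$: this is exactly where local freeness of $h^jR$ for $j>c$ is indispensable, since it forces $\tor_{>0}(h^jR,F)=0$ and thereby stops the higher cohomology of $\tau_{\ge c+1}R$ from bleeding down into degree $c$ after tensoring with the arbitrary module $F$. Convergence presents no obstacle, because the lower piece is bounded and the upper piece collapses to a single row.
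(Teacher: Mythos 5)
Your proof is correct for bounded $R$, and it takes a genuinely different route from the paper's. The paper uses the affine hypothesis in an essential way: each $h^j(R)$ with $j>c$ is finite locally free over an affine scheme, hence projective, so the surjection $\ker d_j\onto h^j(R)$ splits; the splittings assemble into a quasi-isomorphism $\tau_{\le c}R \oplus \bigoplus_{j>c}h^j(R)[-j]\simeq R$, after which the claim is immediate, since each flat summand $h^j(R)[-j]$ stays concentrated in its own degree $j>c$ after applying $\lotimes F$, and $h^c\bigl(\tau_{\le c}R\lotimes F\bigr)\cong h^c(R)\otimes F$ by right-exactness of $\otimes$. You instead keep only the truncation triangle and dispose of the upper piece cohomologically via the hyper-Tor spectral sequence, using nothing but flatness of the $h^j(R)$ for $j>c$. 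What your route buys: it never invokes projectivity, so it works verbatim over a non-affine base. What the paper's route buys: an honest direct-sum decomposition in the derived category (the same device as in Lemma~\ref{split.lem.2}), which in particular handles an unbounded $R$ with no convergence questions. (A cosmetic remark: at the corner $E_2^{c,0}$ you have the two kinds of differentials interchanged --- for this spectral sequence the differentials leaving the corner land in negative Tor indices, while those entering come from cohomological degree $>c$ --- but since both families of positions vanish, the conclusion $E_2^{c,0}=E_\infty^{c,0}$ is unaffected.)

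The one point that is not airtight is convergence for the upper piece when $R$ is unbounded above, which the statement of the lemma permits: $R^j=0$ is required only for $j<0$, and the $h^j(R)$ may be nonzero for all $j>c$. "Collapses to a single row" is not by itself a convergence argument: your spectral sequence arises from filtering the direct-sum total complex of $\tau_{\ge c+1}R$ tensored with a flat resolution of $F$ by resolution degree, and when $\tau_{\ge c+1}R$ is unbounded this filtration is exhaustive but not complete, a situation in which $E_2$-degeneration does not by itself guarantee strong convergence. The vanishing you need, $h^n\bigl(\tau_{\ge c+1}R\lotimes F\bigr)=0$ for $n\le c$, is nevertheless true and can be secured inside your framework: write $\tau_{\ge c+1}R$ as the homotopy colimit of its bounded truncations, apply your (now convergent) spectral sequence to each, and use that both $\lotimes F$ and $h^n$ commute with filtered homotopy colimits; alternatively, induct on the length of a bounded truncation using the triangles $\tau_{\le m-1}R\to \tau_{\le m}R\to h^m(R)[-m]$, where flatness of $h^m(R)$ keeps the last term in degree $m$. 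If you read the lemma with $R$ bounded --- which is all the paper needs, for $R=\myR g_*\omega_{X/S}$ --- then your proof is complete as written.
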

  
\begin{proof}  Let $d_j:R^j\to R^{j+1}$ denote the differential.
  Then $h^j(R)$ is a quotient of $\ker d_j$. If $h^j(R)$ is projective,
  then $\ker d_j\onto h^j(R)$ has a splitting  $h^j(R)\to R^j$.
  Combining these gives a quasi-isomorphism
  $$
  R_c+\tsum_{j>c}h^j(R)[-j]\simeq R,
  \eqno{(\ref{lem:coh-of-ltwist-fg}.1)}
  $$
  where $R_c\leteq\tau_{\leq c}R$ denotes the canonical truncation of $R$ at degree
  $c$ \cite[\href{https://stacks.math.columbia.edu/tag/0118}{Tag  0118}]{stacks-project}.
  
 Therefore
  $h^j(R\lotimes F)\simeq h^j(R_c\lotimes F)$ for $j\leq c$.  On the other hand
  $h^c(R_c\lotimes F) \simeq h^c(R_c)\otimes F$, because $\otimes$ is right exact.
  \end{proof}

\begin{say}[Proof of  Assumption~\ref{lf.thm.pf}.4]\label{verify.assump}
  Using the notation of (\ref{lf.thm.pf}.4), consider the complex
  $\myR g_*\omega_{X/S}$. By (\ref{lf.thm.pf}.2) we know that its homologies are locally free of finite rank for $j>1$. Thus
  Lemma~\ref{lem:coh-of-ltwist-fg} implies that
  $$
  h^1\bigl(\myR g_*\omega_{X/S}\lotimes \omega_S\bigr)\cong
  h^1\bigl(\myR g_*\omega_{X/S}\bigr)\otimes \omega_S \cong
  \myR^1 g_*\omega_{X/S}\otimes \omega_S.
  $$
  On the other hand, (\ref{cor:projection-formula}.1)
 gives that 
  \[
    \myR g_*\omega_X\simeq\myR g_*\omega_{X/S}\lotimes \omega_S.
    \]
    Therefore
    $$
   \myR^1 g_*\omega_{X} \cong h^1\bigl(\myR g_*\omega_{X}\bigr)\cong
  \myR^1 g_*\omega_{X/S}\otimes \omega_S. \qed
  $$
\end{say}

 \subsection*{Allowing some non-rational singularities}{\ }
 \medskip

It turns out that  Theorem~\ref{lf.thm} also holds if  $X$ has a few non-rational singularities.

\begin{thm}\label{iso.nonrta.rem}
   Let $g:X\to S$ be a flat, projective morphism of varieties.
   Assume that  $X$ is CM, and there is 
   a closed subset $Z\subset X$ such that $Z\to S$ is finite, non-dominant, and 
    $X\setminus Z$ has rational singularities.
   Then  (\ref{lf.thm}.1--4) hold.
\end{thm}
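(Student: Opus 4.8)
The plan is to deduce the four statements from the flat case by exploiting that the non-rational locus is concentrated on $Z$, which is small in $S$, and then to upgrade codimension-one information to information everywhere by duality. If the relative dimension is $0$ then $g$ is finite flat and the assertions are elementary, exactly as in the relative-dimension-$0$ case of (\ref{lf.thm.pf}); so assume it is $\geq 1$. Then $Z$ has codimension $\geq 2$ in $X$ (it is finite over $S$ with $g(Z)\subsetneq S$, so $\dim Z=\dim g(Z)\leq \dim S-1$), and since $X\setminus Z$ has rational, hence normal, singularities, $X$ is regular in codimension $1$; being CM it is $S_2$, so $X$ is normal. For a codimension-one point $\eta\in S$ pick the generic point $\xi$ of a component of $X_\eta$: then $\o_{X,\xi}$ is a DVR and $\o_{S,\eta}\to\o_{X,\xi}$ is a faithfully flat local homomorphism, so regularity descends and $\o_{S,\eta}$ is regular. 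Thus $S$ is $R_1$, and being CM by \cite[Tag~045J]{stacks-project} (as $X$ is CM and $g$ is flat) it is normal; moreover its fibers are CM and $\omega_S$ is a reflexive sheaf of rank one, invertible in codimension one.

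The key input is torsion-freeness of $R^ig_*\omega_X$. Let $r\colon X'\to X$ be a resolution and $g'=g\circ r$. Grauert--Riemenschneider gives $\myR r_*\omega_{X'}=r_*\omega_{X'}=:\omega_X^-$, and rationality of $X\setminus Z$ gives $\omega_X^-=\omega_X$ off $Z$, so the cokernel $Q$ of $\omega_X^-\hookrightarrow\omega_X$ is supported on $Z$. Since $Z\to S$ is finite, $\myR g_*Q=g_*Q$ sits in degree $0$, and since $g(Z)\subsetneq S$ it is a torsion $\o_S$-module; as $R^ig_*\omega_X^-=R^ig'_*\omega_{X'}$ is torsion-free by \cite[2.1]{k-hdi1}, the only possibly nonzero connecting map $g_*Q\to R^1g_*\omega_X^-$ vanishes. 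Hence $R^ig_*\omega_X\cong R^ig'_*\omega_{X'}$ is torsion-free for all $i\geq 1$.

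Because $S$ is normal, at a codimension-one point the local ring is a DVR and $\omega_S$ is invertible; by (\ref{cor:projection-formula}.1) (valid as $X$ is CM) one has $R^ig_*\omega_X\cong\omega_S\otimes R^ig_*\omega_{X/S}$ locally there, so torsion-freeness of $R^ig_*\omega_X$ ($i\geq 1$), together with torsion-freeness of $g_*\omega_{X/S}$ (the relative dualizing sheaf $\omega_{X/S}$ is $S$-flat), shows that every $R^ig_*\omega_{X/S}$ is locally free in codimension $\leq 1$; equivalently its non-free locus has codimension $\geq 2$. To cross this locus I would run the induction of (\ref{lf.thm.pf}) on the relative dimension: a general ample $H$ again satisfies the hypotheses (it is CM, $H\cap Z$ is finite and non-dominant over $S$, and $H\setminus(H\cap Z)$ has rational singularities by Bertini), so by induction $g_*\omega_{H/S}$ is locally free, the isomorphisms (\ref{lf.thm.pf}.2) make $R^{\geq 2}g_*\omega_{X/S}$ locally free, and everything reduces to the local freeness of $R^1g_*\omega_{X/S}$ in the exact sequence (\ref{lf.thm.pf}.3). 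The Hodge-theoretic splitting of Theorem~\ref{hdi2.thm}, applied to the \emph{smooth} model $g'\colon X'\to S$ (where its hypotheses hold) and transported to $X$ through the isomorphisms $R^ig_*\omega_X\cong R^ig'_*\omega_{X'}$, provides the required decomposition over a large open subset; Remark~\ref{split.lem} extends it across the codimension-$\geq 2$ locus using that $g_*\omega_{X/S}$ is $S_2$, and the residual torsion of $R^1g_*\omega_{X/S}$ is annihilated by the torsion-freeness of $R^1g_*\omega_X$ together with Assumption~\ref{lf.thm.pf}.4 (whose proof in (\ref{verify.assump}) needs only that $X$ is CM and that $R^{\geq 2}g_*\omega_{X/S}$ is locally free). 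Once all $R^ig_*\omega_{X/S}$ are locally free, (\ref{lf.thm}.1) for $\o_X$ and base change follow from (\ref{dual.elem.say}.1) and Grauert's theorem, and (\ref{lf.thm}.2--4) follow from Theorem~\ref{locally.omega.thm}.

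The genuinely delicate point is the passage across the codimension-$\geq 2$ part of $g(Z)$: torsion-freeness only controls codimension one, while the splitting that governs higher codimension (Theorem~\ref{hdi2.thm}) requires rational singularities and must therefore be produced on $X'$ and matched with the data on $X$. This matching is immediate for $i\geq 1$, where $R^ig_*\omega_X\cong R^ig'_*\omega_{X'}$, but at $i=0$ the two sheaves differ by the torsion sheaf $g_*Q$; reconciling this discrepancy inside the sequence (\ref{lf.thm.pf}.3)---equivalently, checking that the extended splitting is compatible with the actual map $g_*\omega_{H/S}\to R^1g_*\omega_{X/S}$ on $X$ rather than with its analogue on $X'$---is where I expect the main work to lie.
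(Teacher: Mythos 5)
Your core argument for (\ref{lf.thm}.1) coincides with the paper's. The key step is identical: with $r:X'\to X$ a resolution and $Q=\coker\bigl(r_*\omega_{X'}\into\omega_X\bigr)$ supported on $Z$, one has $\myR^{\geq 1}g_*Q=0$ ($Z\to S$ finite) and the connecting map $g_*Q\to\myR^1(g\circ r)_*\omega_{X'}$ vanishes (torsion source, torsion-free target by \cite[2.1]{k-hdi1}), giving $\myR^ig_*\omega_X\cong\myR^i(g\circ r)_*\omega_{X'}$ for $i\geq 1$; this is exactly (\ref{iso.nonrta.rem}.1), after which the proof of Paragraph~\ref{lf.thm.pf} is rerun with Theorem~\ref{hdi2.thm} supplied by the resolution and the torsion killed via Assumption~\ref{lf.thm.pf}.4. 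The paper avoids both your induction and your closing ``delicate point'' by localizing: once $S$ is local, the image in $S$ of $H\cap Z$ is closed and misses the closed point, hence $H\cap Z=\emptyset$ for a general very ample $H$; so $H$ has rational singularities, Theorem~\ref{lf.thm} applies to $H$ directly, and the connecting map $\delta$ has literally the same source for $X$ and for $X'$. (Your matching worry is in any case harmless even in your setup: a splitting of $g'_*\omega_{H'/S}\to\myR^1g'_*\omega_{X'/S}$, $g'=g\circ r$, $H'=r^*H$, transports across the commutative square whose right vertical map $\myR^1g'_*\omega_{X'/S}\to\myR^1g_*\omega_{X/S}$ is an isomorphism, by functoriality of connecting maps.)

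The genuine gap is your treatment of (\ref{lf.thm}.2) and (\ref{lf.thm}.3). These are splittings in the derived category over an \emph{arbitrary} base variety $S$, and ``follow from Theorem~\ref{locally.omega.thm}'' does not produce them: the implications of Theorem~\ref{locally.omega.thm} that create splittings go through Lemma~\ref{split.lem.2}, which requires $S$ affine, and over non-affine $S$ that theorem only gives the \emph{equivalence} of (\ref{locally.omega.thm}.4) and (\ref{locally.omega.thm}.5), not that either holds; local splittings need not glue, which is precisely why such decomposition statements are theorems. In the rational case the paper obtains (\ref{lf.thm}.2) globally from \cite[3.12]{k-hdi2}, which is unavailable here since $X$ is not assumed to have rational singularities. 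This is exactly why the paper's proof has a second half: it takes the global decomposition $\myR(g\circ r)_*\omega_{X'}\simeq\tsum\myR^i(g\circ r)_*\omega_{X'}[-i]$ of \cite[p.172]{k-hdi2} on the smooth model, patches it in degree $0$ with the natural map $g_*\omega_X\to\myR g_*\omega_X$, and checks via (\ref{iso.nonrta.rem}.1) that the resulting map $g_*\omega_{X}+\tsum_{i\geq 1}\myR^i(g\circ r)_*\omega_{X'}[-i]\to\myR g_*\omega_X$ is a quasi-isomorphism, i.e.\ (\ref{iso.nonrta.rem}.2) holds; this is (\ref{locally.omega.thm}.5), whence (\ref{lf.thm}.3) follows from the globally valid equivalence with (\ref{locally.omega.thm}.4), and then (\ref{lf.thm}.2) from (\ref{dual.elem.say}.2). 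Your proposal needs this (or an equivalent) global argument to be complete.
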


Proof.  We start with checking (\ref{lf.thm}.1). We follow the proof in Paragraph~\ref{lf.thm.pf}.
We may assume that $S$ is local. Then a general very ample $H$ is disjoint from $Z$, and   has 
  rational singularities. Thus
  Theorem~\ref{lf.thm} applies to $H$.
  
    As before, (\ref{lf.thm.pf}.2) gives that 
    $\myR^{i+1}g_*\omega_{X/S}$ is locally free for $i\geq 1$.

    We have a potential problem with $\myR^{1}g_*\omega_{X}$.
    In applying (\ref{lf.thm.pf}.4), we needed to know that it is torsion free.  To understand it, let
    $r:X'\to X$ be a resolution of singularities.
    There is an injection  $r_*\omega_{X'}\into \omega_X$,
    whose quotient  $Q$ is supported on $Z$. This gives an exact sequence
    $$
    g_*Q \to  \myR^{1}(g\circ r)_*\omega_{X'}\to \myR^{1}g_*\omega_{X}
    \to \myR^{1}g_*Q.
    $$
    Here $\myR^{1}g_*Q=0$ since $Z\to S$ is finite.
    Also, $\myR^{1}(g\circ r)_*\omega_{X'} $ is torsion-free
    by \cite[2.1]{k-hdi1}. Since $Z$ does not dominate $S$, 
    $g_*Q \to  \myR^{1}(g\circ r)_*\omega_{X'}$ is the zero map.
Thus
$$
\myR^{i}(g\circ r)_*\omega_{X'}\cong \myR^{i}g_*\omega_{X} \qtq{for} i\geq 1,
\eqno{(\ref{iso.nonrta.rem}.1)}
$$
and the rest of the proof of (\ref{lf.thm}.1) in Paragraph~\ref{lf.thm.pf} works.
Thus we get that
$\myR^{i}g_*\omega_{X/S}$ is locally free for $i\geq 1$.
Therefore (\ref{locally.omega.thm}.1--3) all hold,  and so does (\ref{lf.thm}.4).

By \cite[p.172]{k-hdi2}  we know that
$$
\myR (g\circ r)_*\omega_{X'}\simeq \tsum \myR^i(g\circ r)_*\omega_{X'}[-i].
$$
Combining these with the natural maps
$g_*\omega_{X}\to \myR g_*\omega_{X}$  and $\myR (g\circ r)_*\omega_{X'}\simeq \myR g_*\omega_{X}$, we  get a map 
$$
g_*\omega_{X}+\tsum_{i\geq 1}\myR^i(g\circ r)_*\omega_{X'}[-i]
\to \myR g_*\omega_{X},
$$
which is an isomorphism on cohomologies by (\ref{iso.nonrta.rem}.1).  Thus 
$$
\myR g_*\omega_{X}\simeq \tsum \myR^ig_*\omega_{X}[-i].
\eqno{(\ref{iso.nonrta.rem}.2)}
$$
Thus (\ref{locally.omega.thm}.5) holds, and it implies
(\ref{locally.omega.thm}.4), thus (\ref{lf.thm}.3) holds.

The equivalence of (\ref{lf.thm}.3) and (\ref{lf.thm}.2) was noted in (\ref{dual.elem.say}.2).

\qed

\medskip
For the Picard scheme, this implies the following.
 
\begin{cor} \label{lf.cor.1.nr}
 Let $g:X\to S$ be a flat, projective morphism of relative dimension $n$.
   Assume that  $X$ is CM, and there is 
   a closed subset $Z\subset X$ such that 
   \begin{enumerate}
   \item     $X\setminus Z$ has rational singularities,
   \item each fiber of $Z\to S$ has dimension $\leq n-2$, and
   \item the generic fiber of $Z\to S$ has dimension $\leq n-3$.
   \end{enumerate}
   Then 
    \begin{enumerate}\setcounter{enumi}{3}
    \item  $\myR^1g_*\o_X$ is locally free and commutes with base change, and
      \item $\pics^\circ(X/S)\to S$ is a smooth algebraic group scheme, which commutes with  base change.
         \end{enumerate}
\end{cor}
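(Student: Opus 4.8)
The plan is to deduce the statement from the already-proven Theorem~\ref{iso.nonrta.rem} by cutting the relative dimension down with general hyperplane sections, keeping track only of the two cohomological degrees that govern the relative Picard scheme. I would first reduce (\ref{lf.cor.1.nr}.5) to (\ref{lf.cor.1.nr}.4) together with cohomological flatness in degree $0$: exactly as in the proof of Corollary~\ref{lf.cor.1}, once $g_*\o_X$ and $\myR^1g_*\o_X$ are locally free and commute with base change, the combination of \cite[Thm.7.3]{Artin69b}, \cite[Thm.8.4.1]{blr}, and \cite[Exp.VI$_B$]{sga3} shows that $\pics^\circ(X/S)\to S$ is a smooth algebraic group scheme commuting with base change; in characteristic $0$ fibrewise smoothness is automatic, so the only real content is the constancy of $\dim\pics^\circ(X_s/s)=h^1(X_s,\o_{X_s})$. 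Since the fibres are CM of pure dimension $n$, fibrewise Serre duality gives $h^1(X_s,\o_{X_s})=h^{n-1}(X_s,\omega_{X_s})$ and $h^0(X_s,\o_{X_s})=h^{n}(X_s,\omega_{X_s})$, and $\omega_{X/S}$ restricts to $\omega_{X_s}$ on fibres; hence it suffices to show that $\myR^{n-1}g_*\omega_{X/S}$ and $\myR^{n}g_*\omega_{X/S}$ are locally free and commute with base change, after which Grauert's theorem yields the assertions for $\o_X$.

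To prove that, I would assume $n\ge 2$ (for $n\le 1$ condition (\ref{lf.cor.1.nr}.2) forces $Z=\emptyset$, so $X$ has rational singularities and Theorem~\ref{lf.thm} applies directly), choose general members $H_1,\dots,H_{n-2}$ of a sufficiently ample basepoint-free system, and set $X_k:=X\cap H_1\cap\dots\cap H_k$ and $Z_k:=Z\cap X_k$. A Bertini-and-flatness argument shows that each $X_k$ is an irreducible variety, that $g\colon X_k\to S$ is again flat and projective with CM fibres, that $X_k\setminus Z_k$ has rational singularities, and that the numerical hypotheses are inherited: a general hyperplane drops the fibre dimensions of $Z$ by one, so $Z_k\to S$ has fibres of dimension $\le n-2-k$ and generic fibre of dimension $\le n-3-k$. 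In particular $X_{n-2}\to S$ has relative dimension $2$, while $Z_{n-2}\to S$ has fibres of dimension $\le 0$ (hence, being proper, is finite) and empty generic fibre (hence is non-dominant).

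Next, pushing forward the Cartier-divisor sequence $0\to\omega_{X_k/S}\to\omega_{X_k/S}(H_{k+1})\to\omega_{X_{k+1}/S}\to 0$ and using the Serre vanishing $\myR^{i}g_*\omega_{X_k/S}(H_{k+1})=0$ for $i\ge 1$ gives base-change-compatible isomorphisms $\myR^{i}g_*\omega_{X_{k+1}/S}\cong\myR^{i+1}g_*\omega_{X_k/S}$ for $i\ge 1$, exactly as in (\ref{lf.thm.pf}.2); iterating (all intermediate indices staying $\ge 1$) yields $\myR^{n-1}g_*\omega_{X/S}\cong\myR^{1}g_*\omega_{X_{n-2}/S}$ and $\myR^{n}g_*\omega_{X/S}\cong\myR^{2}g_*\omega_{X_{n-2}/S}$. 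Since $X_{n-2}\to S$ satisfies the hypotheses of Theorem~\ref{iso.nonrta.rem}, conclusion (\ref{lf.thm}.1) holds for it, so both right-hand sheaves are locally free and commute with base change; by the reduction above this establishes (\ref{lf.cor.1.nr}.4), and hence (\ref{lf.cor.1.nr}.5).

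The heart of the matter, and the only place where the precise bounds (\ref{lf.cor.1.nr}.2--3) enter, is the inheritance step: conditions (\ref{lf.cor.1.nr}.2--3) are precisely the hyperplane-stable form of ``finite and non-dominant'', degenerating after exactly $n-2$ general cuts to the hypotheses of Theorem~\ref{iso.nonrta.rem}, while the codimension-$2$ bound (\ref{lf.cor.1.nr}.2) is what keeps each $X_k\to S$ flat and forces $Z_k$ to meet the next hyperplane in the expected dimension. The one technical point requiring care is base-change compatibility: the isomorphisms (\ref{lf.thm.pf}.2) must persist after arbitrary base change, which they do because $\myR^{i}g_*\omega_{X_k/S}(H_{k+1})=0$ for $i\ge 1$ holds universally for $H_{k+1}$ sufficiently ample, so that local freeness and base-change compatibility descend along the chain from $X_{n-2}$ back to $X$. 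The genuinely hard input, namely torsion-freeness obtained from a resolution comparison, is already packaged inside Theorem~\ref{iso.nonrta.rem} and need not be reproved here.
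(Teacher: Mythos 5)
Your proposal is correct and takes essentially the same route as the paper: localize on $S$, cut by $n-2$ general, sufficiently ample hyperplane sections---the hypotheses (\ref{lf.cor.1.nr}.2--3) being exactly what degenerates to ``finite and non-dominant'' after these cuts---apply Theorem~\ref{iso.nonrta.rem} in relative dimension $2$, and deduce (\ref{lf.cor.1.nr}.5) from (\ref{lf.cor.1.nr}.4) as in Corollary~\ref{lf.cor.1}. The only real difference is bookkeeping: the paper transports $\myR^1g_*\o_X$ directly along the cuts via $\myR^1g_*\o_X\cong\myR^1(g|_H)_*\o_H$ for $n\geq 3$, whereas you transport $\myR^{n-1}g_*\omega_{X/S}$ and $\myR^{n}g_*\omega_{X/S}$ and convert back by fibrewise Serre duality and Grauert; the two are equivalent, since the vanishing $\myR^ig_*\o_X(-H)=0$ ($i=1,2$) underlying the paper's isomorphism is itself a fibrewise duality statement.
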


Proof.  The claims are local on $S$, thus we may assume that $S$ is local.
Let $H\subset X$ be a general, sufficiently ample divisor.
If $n\geq 3$ then 
$\myR^1g_*\o_X\cong \myR^1(g|_H)_*\o_H$.

Using this repeatedly, we can reduce (\ref{lf.cor.1.nr}.4)
to the case $n=2$. Then the assumptions of Theorem~\ref{iso.nonrta.rem} hold, hence $\myR^1g_*\o_X$ is locally free and commutes with base change.

Finally (\ref{lf.cor.1.nr}.4) implies (\ref{lf.cor.1.nr}.5),  as we noted in the proof of Corollary~\ref{lf.cor.1}.\qed

 \begin{ack}  We thank  
  Y.-J.~Kim for explaining us the proof  of Corollary~\ref{lf.cor.1}, and B.~Bhatt for corrections.
  Partial  financial support   to JK  was provided   by the Simons Foundation   (grant number SFI-MPS-MOV-00006719-02).
  SK was partially supported by NSF Grant DMS-2100389.
    \end{ack}

 %%\bibliography{../refs-main/refs}

 \def\cprime{$'$} \def\cprime{$'$} \def\cprime{$'$} \def\cprime{$'$}
  \def\cprime{$'$} \def\dbar{\leavevmode\hbox to 0pt{\hskip.2ex
  \accent"16\hss}d} \def\cprime{$'$} \def\cprime{$'$}
  \def\polhk#1{\setbox0=\hbox{#1}{\ooalign{\hidewidth
  \lower1.5ex\hbox{`}\hidewidth\crcr\unhbox0}}} \def\cprime{$'$}
  \def\cprime{$'$} \def\cprime{$'$} \def\cprime{$'$}
  \def\polhk#1{\setbox0=\hbox{#1}{\ooalign{\hidewidth
  \lower1.5ex\hbox{`}\hidewidth\crcr\unhbox0}}} \def\cdprime{$''$}
  \def\cprime{$'$} \def\cprime{$'$} \def\cprime{$'$} \def\cprime{$'$}
\providecommand{\bysame}{\leavevmode\hbox to3em{\hrulefill}\thinspace}
\providecommand{\MR}{\relax\ifhmode\unskip\space\fi MR }
% \MRhref is called by the amsart/book/proc definition of \MR.
\providecommand{\MRhref}[2]{%
  \href{http://www.ams.org/mathscinet-getitem?mr=#1}{#2}
}
\providecommand{\href}[2]{#2}

 \medskip
 
\end{document}